\newtheorem{cor}{Corollary}
\newtheorem{thm}{Theorem}
\providecommand{\keywords}[1]{\textbf{Keywords:} #1}
\newcommand{\m}[1]{\mathrm{#1}}
\newcommand\sloane[1]{\href{https://oeis.org/#1}{#1}}
\newcommand\e{\bullet}
\author{Jean-Luc Baril \thanks{Electronic address: \texttt{barjl@u-bourgogne.fr}}}
\author{Richard Genestier \thanks{Electronic address: \texttt{Richard.Genestier@u-bourgogne.fr}}}
\author{Sergey Kirgizov \thanks{Electronic address: \texttt{sergey.kirgizov@u-bourgogne.fr}; Corresponding author}}
\affil{LIB, Univ. Bourgogne Franche-Comté, France}
\date{\today}
\title{Pattern distributions in Dyck paths with a first return decomposition constrained by height}
\title{Pattern distributions in Dyck paths with a first return decomposition constrained by height}
\begin{document}

\maketitle

\begin{abstract}
  We provide generating functions for the popularity and the distribution of patterns of length at most three over the set of Dyck paths having a first return decomposition constrained by height.
\end{abstract}
\keywords{Dyck and Motzkin paths, pattern statistic, distribution, popularity}
\section{Introduction and notation}
\label{introduction}
 Dyck paths with a constrained first return decomposition were introduced in~\cite{decreasing-dyck} where the authors present both enumerative results using generating functions and a constructive bijection with the set of Motzkin paths. In~\cite{decreasing-motzkin}, a similar study has been conducted for Motzkin, 2-colored Motzkin, Schröder and Riordan paths. In the literature, many papers deal with the enumeration of classical Dyck paths according to
different parameters, e.g. length, number of peaks, valleys, double
rises and other pattern
occurrences~\cite{Deu,Man2,Man1,Mer,Pan,Pea,Sap,Sun}.
Restricted classes of Dyck paths have also been considered, for instance Barcucci et al.~\cite{Barc} consider
Dyck paths having a non-decreasing
height sequence of valleys (see also \cite{Cza,Den}).  Other
papers deal with Motzkin paths using similar
methods~\cite{BarMot,BrenMav,DonSha,DraGan,ManSchSun,SapTsi,WagPro}.
 Motzkin and Catalan numbers appear alongside in many situations~\cite{DonSha} and several one-to-one correspondences exist between restricted Dyck paths
 and Motzkin paths.
For instance, Dyck paths avoiding a triple rise are
enumerated by the Motzkin numbers~\cite{callan2004two}.

  In this paper, we focus on the distribution and the popularity of  patterns of length at most three in constrained Dyck paths defined in~\cite{decreasing-dyck}. Our method consists in showing how patterns are
getting transferred from constrained Dyck paths to Motzkin paths, which settles us in a more suitable ground in order to provide  generating functions for the distribution and the popularity.

 A {\em Motzkin path} of length $n\geq 0$ is a lattice path consisting of
flat steps $F=(1,0)$, up steps $U=(1,1)$ and down steps $D=(1,-1)$,
starting at $(0,0)$, ending at $(n,0)$ and never going below the
$x$-axis. For $n\geq 0$, we denote by $\mathcal{M}_n$ the set of all Motzkin
paths of length $n$ and  we set $\mathcal{M}=\bigcup_{n\geq
  0}\mathcal{M}_n$. A Motzkin path of length $2n$
  with no flat steps is a {\em Dyck path} of semilength $n$. For $n\geq 0$, let $\mathcal{D}_n$ be the set of all Dyck paths of semilength $n$ and $\mathcal{D}=\bigcup_{n\geq 0}\mathcal{D}_n$. The cardinality of
$\mathcal{D}_n$ is given by the $n$th Catalan number $c_n=\frac{1}{n+1} {{2n}\choose{n}}$, which is the
general term of the sequence
\sloane{A000108} in the On-line Encyclopedia
of Integer Sequences of N.J.A. Sloane~\cite{sloane}.
The cardinality of $\mathcal{M}_n$ is given by the
$n$th Motzkin number $\sum_{k=0}^{\lfloor n/2\rfloor} \binom{n}{2k}
c_k$ (see \sloane{A000108} in~\cite{sloane}).

Any non-empty Dyck path $P\in\mathcal{D}$ has a unique first return
decomposition~\cite{Deu} of the form $P=U\alpha D\beta$ where $\alpha$
and $\beta$ are two Dyck paths in
$\mathcal{D}$. In
 \cite{decreasing-dyck}, the authors introduced the set $\mathcal{D}^{h,\geq}$ constituted of the empty Dyck path and the Dyck paths  in $\mathcal{D}$ having a first return decomposition  satisfying $$h(U\alpha D)\geq h(\beta)$$ where $\alpha,\beta\in \mathcal{D}^{h,\geq}$
   and $h$ returns the maximal
height of a Dyck path. For $n\geq 0$, let  $\mathcal{D}_n^{h,\geq}$ be the subset of Dyck paths of semilength $n$ in  $\mathcal{D}^{h,\geq}$. For instance,  $\mathcal{D}_3^{h,\geq}$
consists of  four Dyck paths $UDUDUD$, $UUDDUD$, $UUDUDD$ and
$UUUDDD$. In~\cite{decreasing-dyck},  the authors prove, using generating functions,  that $\mathcal{D}_n^{h,\geq}$ and $\mathcal{M}_n$ have the same cardinality, and they  present also the following bijection $\phi$ between these sets.

For
$P\in \mathcal{D}^{h,\geq}$,

$$\phi(P)=\left\{\begin{array}{ll}
\epsilon& \mbox{ if } P=\epsilon,\\
\phi(\alpha) F&\mbox{ if } P=\alpha UD,\\
\phi(\alpha)\phi(\gamma)U\phi(\beta) D & \mbox{ if } P= \alpha UU\beta D \gamma D.\\
\end{array}\right.$$

  For instance, the images by $\phi$ of $UDUDUD$, $UUDDUD$, $UUDUDD$, $UUUDDD$, $UUUUDDDDUUUDDUDD$
 are respectively $FFF$, $UDF$, $FUD$, $UFD$ and $UUDDFUFD$. We refer
 to Figure \ref{fig2} for an illustration of this mapping.

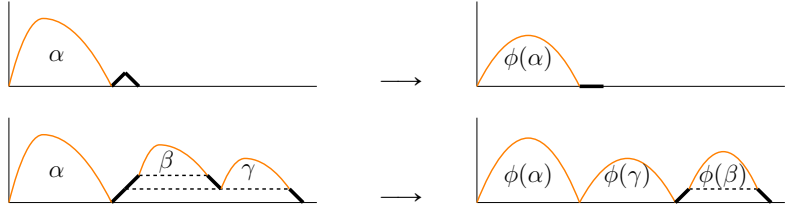
\begin{figure}[h]
\begin{center}\scalebox{0.45}{\begin{tikzpicture}[ultra thick]
 \draw[black, thick] (0,0)--(9,0); \draw[black, thick] (0,0)--(0,2.5);
  \draw[black, line width=3pt] (3,0)--(3.4,0.4);
 \draw[black, line width=3pt] (3.4,0.4)--(3.8,0);
 \draw[orange,very thick] (0,0) parabola bend (1,2) (3,0);
 \draw  (1.4,0.9) node {\huge $\mathbf{\alpha}$};
 \end{tikzpicture}}
 \qquad$\longrightarrow$\qquad
\scalebox{0.45}{\begin{tikzpicture}[ultra thick]
 \draw[black, thick] (0,0)--(9,0); \draw[black, thick] (0,0)--(0,2.5);
 \draw[black, line width=3pt] (3,0)--(3.7,0);
\draw[orange,very thick] (0,0) parabola bend (1.5,1.5) (3,0);
 \draw  (1.5,0.8) node {\huge $\mathbf{\phi(\alpha)}$};
 \end{tikzpicture}}
\end{center}
\begin{center}\scalebox{0.45}{\begin{tikzpicture}[ultra thick]
 \draw[black, thick] (0,0)--(9,0); \draw[black, thick] (0,0)--(0,2.5);
  \draw[black, line width=3pt] (3,0)--(3.4,0.4);\draw[black, dashed, very thick] (3.8,0.8)--(5.8,0.8);
  \draw[black, dashed, very thick] (3.4,0.4)--(8.2,0.4);
  \draw[black, line width=3pt] (3.4,0.4)--(3.8,0.8);\draw[black, line width=3pt] (5.8,0.8)--(6.2,0.4);
  \draw[black, line width=3pt] (8.2,0.4)--(8.6,0);
 \draw[orange,very thick] (0,0) parabola bend (1,2) (3,0);
  \draw[orange,very thick] (3.8,0.8) parabola bend (4.4,1.7) (5.8,0.8);
  \draw[orange,very thick] (6.2,0.4) parabola bend (6.9,1.3) (8.2,0.4);
 \draw  (1.4,0.9) node {\huge $\mathbf{\alpha}$};\draw  (4.6,1.2) node {\huge $\mathbf{\beta}$};
 \draw  (7,0.9) node {\huge $\mathbf{\gamma}$};
 \end{tikzpicture}}
 \qquad$\longrightarrow$\qquad
\scalebox{0.45}{\begin{tikzpicture}[ultra thick]
 \draw[black, thick] (0,0)--(9,0); \draw[black, thick] (0,0)--(0,2.5);
 \draw[black, dashed, very thick] (6.2,0.4)--(8.2,0.4);
  \draw[black, line width=3pt] (8.2,0.4)--(8.6,0);
  \draw[orange,very thick] (0,0) parabola bend (1.5,1.9) (3,0);
   \draw[orange,very thick] (3,0) parabola bend (4.4,1.3) (5.8,0);
  \draw[black, line width=3pt] (5.8,0)--(6.2,0.4);
  \draw[orange,very thick] (6.2,0.4) parabola bend (7.2,1.5) (8.2,0.4);
 \draw[very thick]  (1.5,0.8) node {\huge $\mathbf{\phi(\alpha)}$};\draw  (4.4,0.8) node {\huge $\mathbf{\phi(\gamma)}$};
 \draw  (7.2,0.8) node {\huge $\mathbf{\phi(\beta)}$};
 \end{tikzpicture}}
\end{center}
\caption{Illustration of the bijection $\phi$ between $\mathcal{D}_n^{h,\geq}$ and $\mathcal{M}_n$.}
\label{fig2}\end{figure}

\bigskip

A {\em statistic} on a set $S$ of paths is an association of an integer in $\mathbb{Z}$ to each path in $S$. For instance the map that returns the number of steps is a statistic, and we denote by $\m 1$ (resp. $\m 0$) the constant statistic that sends any path to 1 (resp. $0$). Let $\mathcal{S}$ be the set of all statistics on a set $S$. For $\m X,\m Y\in\mathcal{S}$,  we define the statistic $\m X+\m Y$ so that $(\m X+\m Y)(P)=\m X(P)+\m Y(P)$ for any $P\in S$, which endows $\mathcal{S}$ with a $\mathbb{Z}$-module structure. Let $S$ and $T$ be two sets of paths, and let $\mathcal{S}$ and $\mathcal{T}$ be the associated statistic sets. Two statistics $\m X\in \mathcal{S}$ and $\m Y\in\mathcal{T}$ have the {\em same distribution} if and only if there exists a bijection $f$ from $S$ to $T$ such that for any $P\in S$ we have $\m X(P)=\m Y(f(P))$. In this case, we say that  $f$ transports the statistic $\m X$ into  $\m Y$, which can be shortly written with the statistic equation $f(\m X)=\m Y$ (or $\m X=\m Y$ whenever $f$ is the identity).

A {\em pattern} $X$ of length $k\geq 1$ occurs in a path $P$ if and only if $P$ contains $X$ as a sequence of consecutive steps. Note that other variants of pattern definition exist in the literature (see for instance \cite{Bach}). From a given pattern $X$ and a set $S$ of paths, we associate the {\em statistic} $\m X$ from  $S$ to $\mathbb{N}$ such that $\m X(P)$ is the number of occurrences of the pattern $X$ in $P$. The {\em popularity} of a pattern $X$ in  $S$ is the total number of occurrences of the pattern $X$ in all paths $P\in S$, that is $\sum_{P\in S} \m X(P)$.


For instance, if $P = UUUDDUDD$ then we have $\m{UD}(P) = \m{UU}(P)= 2$, $\m{DDD}(P) = 0$. Moreover, if $S=\{UUDD,UDUD\}$ then the popularity of the pattern $UD$ in $S$ is $3$. Also, for any subset $S$ of $\mathcal{M}$, we have the statistic equation $\m U=\m D$ since for any path in $S$ the number of up steps equals to the number of down steps. If we restrict $S$ to $\mathcal{D}_n$ (resp. $\mathcal{M}_n$), then we have $\m U+\m D=2\m n$ (resp. $\m U+\m F+\m D=\m n$), where $\m n$ is the  constant statistic $P\mapsto n$.

Considering the above bijection $\phi$ from  $\mathcal{D}_n^{h, \geq}$ to $\mathcal{M}_n$,  the length of $\phi(P)$ is the semilength of $P$ and we easily deduce the statistic equation:
 \begin{equation}
  \phi(\m U) = \phi(\m D) = \m U + \m D + \m F = \m n.
  \label{eq3}
\end{equation}

In this paper, we present statistic equations showing how $\phi$: $\mathcal{D}_n^{h, \geq}\rightarrow \mathcal{M}_n$ transports statistics associated to patterns of length at most three into  linear combinations of other statistics. Then, this allows us to conduct our enumerative study in the  more natural and simpler context of Motzkin paths, while a direct study on $\mathcal{D}_n^{h, \geq}$ is complicated due to the lack of an adequate recursive decomposition. So, we use statistic equations in order to derive bivariate generating functions for the distribution of patterns of length at most three, and we deduce the pattern popularity thereafter.  Section 2 deals with patterns of length 2 while Section 3 deals with patterns of length 3. Many of the resulting sequences correspond to existing entries
in the On-line Encyclopedia of Integer Sequences of
N.J.A. Sloane~\cite{sloane} enumerating a quantity of already known
combinatorial structures,  and we also obtain new sequences not yet known in~\cite{sloane}.


\section{Patterns of length 2}
\label{2}
In this part, we provide statistic equations showing how patterns of length two behave through $\phi: \mathcal{D}_n^{h,\geq}\rightarrow \mathcal{M}_n$, which allows us to deduce generating functions for the distribution and the popularity of such patterns. See Table~\ref{tab.UD} for an illustration of the distributions and Table~\ref{tab.pop2} for the first terms of popularity sequences.

\begin{thm}
  For $n\geq 0$, the bijection $\phi$ from $\mathcal{D}_n^{h,\geq}$ to $\mathcal{M}_n$ transports statistics associated to patterns of length two as follows: \begin{align}
    \phi(\m{UD}) & = \m F + \m{UD},
    \label{UD}
    \\
    \phi(\m{UU}) = \phi(\m{DD}) & = \m U + \m{UU} + \m{UF},
    \label{UU}
    \\
    \phi(\m{DU}) & = \m{FF} + \m{FU} + \m{DF} + \m{DU}.
    \label{DU}
  \end{align}
  \label{thm1}
\end{thm}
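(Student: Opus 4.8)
The plan is to prove the three statistic equations simultaneously by strong induction on the semilength of $P\in\mathcal{D}^{h,\geq}$, driven directly by the recursive definition of $\phi$. The base case $P=\epsilon$ is immediate, since every statistic vanishes on $\epsilon$ and on $\phi(\epsilon)=\epsilon$. For the inductive step I would treat the two nontrivial branches of $\phi$ separately: the branch $P=\alpha UD$ with $\phi(P)=\phi(\alpha)F$, and the branch $P=\alpha UU\beta D\gamma D$ with $\phi(P)=\phi(\alpha)\phi(\gamma)U\phi(\beta)D$. In both branches $\alpha,\beta,\gamma\in\mathcal{D}^{h,\geq}$ have strictly smaller semilength, so the induction hypothesis applies to them; concretely it lets me rewrite $\m F+\m{UD}$, $\m U+\m{UU}+\m{UF}$ and $\m{FF}+\m{FU}+\m{DF}+\m{DU}$, evaluated on $\phi(\alpha),\phi(\beta),\phi(\gamma)$, as $\m{UD}$, $\m{UU}$ and $\m{DU}$ evaluated on $\alpha,\beta,\gamma$.

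The computational engine is the elementary rule that for a length-two pattern the number of occurrences in a concatenation $Q_1Q_2$ equals the sum of the occurrences in $Q_1$ and $Q_2$ plus a boundary term equal to $1$ exactly when $Q_1$ ends and $Q_2$ begins with the two prescribed steps (both nonempty). First I would record the two sign constraints that make the bookkeeping collapse: a nonempty path of $\mathcal{D}^{h,\geq}$ starts with $U$ and ends with $D$, while any Motzkin image $\phi(Q)$ never starts with $D$ nor ends with $U$, so its first step lies in $\{U,F\}$ and its last step in $\{D,F\}$. On the Dyck side I would then tally the pairs produced at the junctions of $\alpha,\beta,\gamma$ with the explicit steps $U,U,D,D$; writing $a,b,g$ for the indicators that $\alpha,\beta,\gamma$ are nonempty, this gives boundary contributions $1-b$ for $\m{UD}$, $1+b$ for both $\m{UU}$ and $\m{DD}$, and $a+g$ for $\m{DU}$ in the second branch, with the simpler values $+1$ for $\m{UD}$, $+a$ for $\m{DU}$ and $0$ otherwise in the first branch.

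The crucial observation is that the right-hand sides are grouped precisely so that the Motzkin-side boundary terms depend only on which of $\alpha,\beta,\gamma$ are empty, never on the particular first or last steps of the images. Indeed, by the sign constraints a junction of the form (last step of an image, first step of an image) ranges over $\{F,D\}\times\{U,F\}=\{FF,FU,DF,DU\}$, and a junction (last step of an image, explicit $U$) is $FU$ or $DU$; hence the group $\m{FF}+\m{FU}+\m{DF}+\m{DU}$ absorbs exactly one unit at each such junction, yielding the combined boundary count $ag+[\,\alpha\ \text{or}\ \gamma\neq\epsilon\,]=a+g$ in the second branch and $+a$ in the first. Likewise the only junction touching $\m{UU}+\m{UF}$ is (explicit $U$, first step of $\phi(\beta)$), which is $UU$ or $UF$ regardless of that first step, contributing $b$, while $\m U$ supplies the $+1$ from the explicit up step; and $\m F+\m{UD}$ picks up only the genuine $UD$ created when $\beta=\epsilon$, namely $1-b$. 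Matching these Motzkin counts against the Dyck tallies and applying the induction hypothesis closes both branches for the $\m{UD}$, $\m{UU}$ and $\m{DU}$ equations. Finally, the $\m{DD}$ equation would follow with no further computation from the identity $\m{UU}(P)=\m{DD}(P)$, which holds on every Dyck path because $\m{UU}+\m{UD}$ and $\m{DD}+\m{UD}$ both count the up steps (each $U$ has a successor, each $D$ a predecessor).

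I expect the main obstacle to be the careful boundary bookkeeping in the second branch, particularly the edge cases in which $\alpha$, $\beta$ or $\gamma$ is empty, since emptiness changes which steps are genuinely adjacent at a junction. The whole argument rests on checking that, thanks to the sign constraints and the chosen groupings, each of these edge cases still contributes exactly what the indicator formulas $1-b$, $1+b$ and $a+g$ predict; verifying this case split cleanly, rather than the algebra itself, is where the attention is needed.
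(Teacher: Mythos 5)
Your proposal is correct, and I checked the boundary bookkeeping: in the branch $P=\alpha UU\beta D\gamma D$ the Dyck-side junction contributions are indeed $1-b$ for $\m{UD}$, $1+b$ for $\m{UU}$ and $\m{DD}$, and $a+g$ for $\m{DU}$, while on the Motzkin side $\phi(\alpha)\phi(\gamma)U\phi(\beta)D$ the junction before the explicit $U$ and the junction between $\phi(\alpha)$ and $\phi(\gamma)$ together contribute $ag+(a+g-ag)=a+g$ to $\m{FF}+\m{FU}+\m{DF}+\m{DU}$, the explicit $U$ and its successor contribute $1+b$ to $\m{U}+\m{UU}+\m{UF}$, and the pair $UD$ created when $\beta=\epsilon$ contributes $1-b$ to $\m{F}+\m{UD}$; the first branch is analogous. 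However, your route is genuinely different from the paper's. The paper does not reprove~(\ref{UD}) at all: it cites the earlier article for $\phi(\m{UD})=\m F+\m{UD}$ and then obtains~(\ref{UU}) and~(\ref{DU}) purely by linear algebra on global statistic identities --- $\m{UU}+\m{UD}=\m n$ on $\mathcal{D}_n^{h,\geq}$ versus $\m F+\m U+\m{UD}+\m{UU}+\m{UF}=\m n$ on $\mathcal{M}_n$ for the first, and the count of all adjacent pairs ($2\m n-\m 1$ on Dyck paths, $\m n-\m 1$ on Motzkin paths) together with $\m U=\m D$ for the second. Your simultaneous induction on the first return decomposition is longer and requires the case split on emptiness of $\alpha,\beta,\gamma$, but it is self-contained (no appeal to the external reference), it explains structurally \emph{why} the right-hand sides are grouped as they are (each group is exactly the set of pairs compatible with the sign constraints at a junction), and it is in fact the same technique the paper later deploys for the length-three patterns in Theorems~\ref{UUD} and~\ref{UDU}. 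The paper's approach buys brevity and reuse of known results; yours buys independence and a uniform mechanism that extends to longer patterns.
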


\begin{proof}
  For equation~(\ref{UD}), we refer to~\cite{decreasing-dyck}.

  For equation~(\ref{UU}), we observe that for any Dyck path of semilength $n$ an up step $U$ is always followed by $U$ or $D$, which implies the equality $\m{UU}+\m{UD}=\m{n}$ on~$\mathcal{D}_n^{h,\geq}$. With a similar argument and $\m{U}=\m{D}$, we have $\m{F} + \m{U} + \m{UD} + \m{UU} + \m{UF} = \m{n}$ on $\mathcal{M}_n$. Using~\cref{UD} we obtain $\phi(\m{UU}) = \m{n} - \phi(\m{UD}) = \m{n}
  - \big( \m{F} + \m{UD} \big)  = \m{U} + \m{UU} + \m{UF}$.

 For equation~(\ref{DU}), we observe that the  statistic $\m{UU}+\m{UD}+\m{DU}+\m{DD}$ equals $2\m{n}-\m{1}$ on $\mathcal{D}_n^{h,\geq}$. Using the straightforward equality $\m{UU}=\m{DD}$ on $\mathcal{D}_n$, we obtain $\m{DU}=2\m{n}-\m{1}-2\m{UU}-\m{UD}$.
   Applying the bijection $\phi$ and using~\cref{UD,UU} we obtain
  $\phi(\m{DU}) = 2\m{n} - \m{1} - 2 (\m{U} + \m{UU} + \m{UF}) - (\m{F} +\m{UD})$.  On the other hand, on the set of Motzkin paths of length $n$ we have the statistic equations $\m{U}+\m{F}+\m{D}=\m{n}$ and  $\m{FD}+\m{FU} + \m{FF} + \m{DD} + \m{DU} + \m{DF} + \m{UD} + \m{UU} + \m{UF}=\m{n}-{1}$, which induces
  $2\m n-\m{1}=\m{U}+\m{F}+\m{D}+\m{FD}+\m{FU} + \m{FF} + \m{DD} + \m{DU} + \m{DF} + \m{UD} + \m{UU} + \m{UF}$. Also, for any  Motzkin path we have $\m{U}=\m{D}$ which  implies $\m{UU}+\m{UF}+\m{UD}=\m{UD}+\m{FD}+\m{DD}$, and thus $\m{UU}+\m{UF}=\m{FD}+\m{DD}$. So, combining all these equations, we obtain $\phi(\m{DU})=\m{U}+\m{F}+\m{D}+\m{FD}+\m{FU} + \m{FF} + \m{DD} + \m{DU} + \m{DF} + \m{UD} + \m{UU} + \m{UF} -\m{U}-\m{D}-\m{UU}-\m{UF}-\m{FD}-\m{DD}- \m{F}-\m{UD}= \m{FF}+\m{FU}+\m{DF}+\m{DU}$.
 \end{proof}

\begin{thm} The bivariate generating functions $F_{p}(x,y)$ where the coefficient of $x^ny^k$ is the number of Dyck paths in $\mathcal{D}_n^{h,\geq}$  containing exactly $k$ occurrences of the pattern $p\in \{UD, UU, DD, DU\}$ are

  \begin{equation}
    \nonumber
  F_{UD}(x,y) = \frac{ x^{2} - x^{2} y - x y + 1 - \sqrt{- 4 x^{2} + \left(x^{2} \left(y - 1\right) + x y - 1\right)^{2}} }{2 x^{2}},
  \label{g.f.UD}
  \end{equation}
  \begin{equation}
    \nonumber
    F_{UU}(x,y) = F_{DD}(x,y) = \frac{x^{2} y^{2} - x^{2} y - x + 1 - \sqrt{- 4 x^{2} y^{2} + \left(x^{2} y \left(y - 1\right) - x + 1\right)^{2}} }{2 x^{2} y^{2}},
    \label{g.f.UU}
  \end{equation}
  \begin{equation}
    \nonumber
    F_{DU}(x,y) = \frac{x^{2} y - x^{2} - x y + 1 - \sqrt{- 4 x^{2} + \left(x^{2} \left(y - 1\right) + x y - 1\right)^{2}} }{2 x^{2} y}.
    \label{g.f.DU}
  \end{equation}
  \label{thm.gf}
\end{thm}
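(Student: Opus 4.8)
The plan is to transfer the entire computation to Motzkin paths by means of Theorem~\ref{thm1}. Since $\phi$ sends a Dyck path of semilength $n$ in $\mathcal{D}_n^{h,\geq}$ to a Motzkin path of length $n$ (see~\eqref{eq3}) and transports each length-two pattern statistic into the linear combinations recorded in~\eqref{UD},~\eqref{UU} and~\eqref{DU}, the number of paths of $\mathcal{D}_n^{h,\geq}$ with exactly $k$ occurrences of $p$ equals the number of paths of $\mathcal{M}_n$ on which the transported statistic $\phi(\m p)$ takes the value $k$. Hence $F_p(x,y)=\sum_{M\in\mathcal{M}} x^{|M|} y^{\phi(\m p)(M)}$, and it suffices to evaluate these three Motzkin generating functions. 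The engine for all of them is the first-return decomposition of a Motzkin path, $M=\epsilon \mid FM' \mid U M_1 D M_2$ with $M',M_1,M_2\in\mathcal{M}$, which converts each generating function into a quadratic equation.

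For $F_{UD}$ I would mark length by $x$ and the statistic $\m F+\m{UD}$ by $y$. A leading flat step contributes a factor $xy$ (it is counted by $\m F$ and creates no new $UD$), giving the term $xyM$. In the case $UM_1DM_2$ the two framing steps give $x^2$ and the factor $M_2$ gives $M$; the only delicate point is the junction between $U$ and $M_1$, where a new $UD$ occurrence is created exactly when $M_1=\epsilon$ (the block is then the peak $UD$), whereas for $M_1\neq\epsilon$ the step $U$ is followed by $U$ or $F$ and no counted pattern appears. This yields the functional equation $M=1+xyM+x^2\bigl(y+(M-1)\bigr)M$, a quadratic in $M$ whose power-series root (the one with $M(0,y)=1$, i.e.\ the minus sign in front of the discriminant) is exactly the stated $F_{UD}$.

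The computation for $F_{UU}=F_{DD}$ is the same in spirit, now marking $\m U+\m{UU}+\m{UF}$ by $y$: a leading flat step is invisible to the statistic and contributes only $xM$, while in $UM_1DM_2$ the single step $U$ always contributes one mark and, when $M_1\neq\epsilon$, the junction $U$--(first step of $M_1$) is necessarily $UU$ or $UF$ and contributes a second mark. This produces $M=1+xM+x^2y\bigl(1+y(M-1)\bigr)M$, whose admissible root is the claimed $F_{UU}$. For $F_{DU}$ I would avoid a third decomposition by using a counting identity on Motzkin paths: among the $n-1$ consecutive step-pairs of a nonempty path, those starting with $U$ total $\m U$ and those ending with $D$ total $\m D$, so the pairs lying in $\{DU,DF,FU,FF\}$ number $(n-1)-\m U-(\m D-\m{UD})=\m F+\m{UD}-1$ after using $\m U=\m D$ and $\m U+\m F+\m D=\m n$. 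By~\eqref{DU} and~\eqref{UD} this means $\phi(\m{DU})=\phi(\m{UD})-1$ on nonempty paths (and both vanish on $\epsilon$), whence $F_{DU}(x,y)=1+\bigl(F_{UD}(x,y)-1\bigr)/y$; substituting the formula for $F_{UD}$ gives the stated expression.

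I would expect the main obstacle to be the boundary bookkeeping in the decomposition: correctly deciding, in each case according to whether $M_1$ (or a leading block) is empty, which pattern occurrences are created or destroyed at the junction points, since a single miscount there alters the linear coefficient of the quadratic and hence the whole closed form. A secondary but routine point is selecting the branch of the square root that is a formal power series with constant term $1$.
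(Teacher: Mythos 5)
Your proposal is correct and follows essentially the same route as the paper: transfer each statistic to Motzkin paths via Theorem~\ref{thm1}, refine the first-return decomposition $\mathcal{M}=\epsilon+F\mathcal{M}+U\mathcal{M}D\mathcal{M}$ to track the transported patterns (your functional equation for $UU$ is literally the paper's), and reduce $F_{DU}$ to $F_{UD}$ via the ``one fewer valley than peak'' relation. The only cosmetic differences are that the paper cites Corollary~3 of~\cite{decreasing-dyck} for $F_{UD}$ where you rederive it, and it writes $F_{DU}(x,y)=1+\frac{F_{UD}(x,y)-F_{UD}(x,0)}{y}$ where you use $F_{UD}(x,0)=1$, which is the same thing since only the empty path avoids $UD$.
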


\begin{proof}
  For $p=UD$, Corollary 3 in~\cite{decreasing-dyck} provides directly the bivariate generating function as a solution of a functional equation on Motzkin paths with respect to the number of occurrences of patterns $F$ and $UD$ (see~\cref{UD}).

For $p=UU$, there are two ways to obtain the generating function. First, we know that $\m{UU} + \m{UD} =\m n $ for Dyck paths of semilength $n$. So, we can obtain directly the generating function by calculating $F_{UD}(xy,\frac{1}{y})$. Moreover we know that $\phi(\m{UU})=\m U + \m{UU} +\m{UF}$ (see~\cref{UU}).  So, we decompose the set $\mathcal{M}$ of all Motzkin paths in the following way:  $$\mathcal{M} = \epsilon + F \mathcal{M} + UD \mathcal{M} + U
  (\mathcal{M \backslash \epsilon}) D \mathcal{M}.$$
	
The generating function for $\epsilon + F\mathcal{M}$ is given by $1+xM(x,y)$; the g.f. for $UD \mathcal{M}$ is $x^2yM(x,y)$ since $UD$ contains one occurrence of $U$; the g.f. for  $U
  (\mathcal{M \backslash \epsilon}) D \mathcal{M}$ is $x^2y^2(M(x,y)-1)M(x,y)$ since  a path in $U
  (\mathcal{M \backslash \epsilon})D$  starts with $U$ and its first two steps are  either $UU$ or  $UF$. This induces the functional
  equation
  $$ M(x,y) = 1 + xM(x,y) + x^2yM(x,y) + x^2y^2(M(x,y)-1)M(x,y), $$ which also gives the expected result.

For $p=DU$, the result is obtained by using the
  equality $\m{DU} = \m{UD} - \m 1$  and  evaluating $F_{DU}(x,y)=1+\frac{F_{UD}(x,y)-F_{UD}(x,0)}{y}$. Note that we can obtain this result using ~\cref{DU}.
\end{proof}

Table~\ref{tab.UD} gives the number of paths in $\mathcal{D}_n^{h,\geq}$ having $k$
    peaks $UD$ for $1\leq n\leq 11$ and $1\leq k\leq 8$. Applying standard techniques from generating function theory, we verify that the second row  corresponds to the sequence of quarter-squares numbers,
$\lfloor (n^2/4) \rfloor$, which is the  third row of Lozani\'c's triangle (see \sloane{A034851} and \cite{losanitsch1897}).
The third row is a shift of the  sequence \sloane{A005994} corresponding to alkane numbers $l(7, n)$
from fifth row of Lozani\'c's triangle, which enumerates certain symmetries exhibited by chemical
entities (alkane) consisting of hydrogen and carbon atoms arranged in a
tree-like structure. Third diagonal
corresponds to the squares, while fourth diagonal generates octahedral
numbers, $n(2n^2 + 1)/3$ (see \sloane{A005900}).

\begin{table}[H]
  \centering
  \small
  \begin{tabular}{c|ccccccccccc}

    $ k \backslash n $ &  1 &  2 &  3 &  4 &  5 &  6 &  7 &  8 &  9 &  10 &  11 \\\hline
     1  & 1 & 1 & 1 & 1 & 1 & 1  & 1  & 1   & 1   & 1   & 1        \\
     2  &   & 1 & 2 & 4 & 6 & 9  & 12 & 16  & 20  & 25  & 30      \\
     3  &   &   & 1 & 3 & 9 & 19 & 38 & 66  & 110 & 170 & 255    \\
     4  &   &   &   & 1 & 4 & 16 & 44 & 111 & 240 & 485 & 900   \\
     5  &   &   &   &   & 1 & 5  & 25 & 85  & 260 & 676 & 1615  \\
     6  &   &   &   &   &   & 1  & 6  & 36  & 146 & 526 & 1602  \\
     7  &   &   &   &   &   &    & 1  & 7   & 49  & 231 & 959   \\
     8  &   &   &   &   &   &    &    & 1   & 8   & 64  & 344   \\
     9  &   &   &   &   &   &    &    &     & 1   &  $\ldots$  & $\ldots$     \\
    \hline
    \bf $\Sigma$
           & 1 & 2 & 4 & 9 & 21 & 51 & 127 & 323 & 835 & 2188 & 5798
  \end{tabular}
  \caption{Number of paths in $\mathcal{D}_n^{h,\geq}$ having $k$
    peaks $UD$, or equivalently $k-1$ valleys $DU$, or equivalently $n - k$
    double rises $UU$.  }
\label{tab.UD}
\end{table}

\begin{cor}
  For $n\geq 0$, the popularity of pattern $p \in \{UU, UD, DD, DU\}$ in $\mathcal{D}_n^{h, \geq}$ is given by
   the generating function $G_{p}(x)$:

  \begin{equation}\nonumber
    G_{UD}(x) = \frac{\left(x - 1\right) \sqrt{- 3 x^{2} - 2 x + 1} - 3 x^{2} - 2 x + 1}{2 x \left(3 x - 1\right)}
    \label{p.UD},
  \end{equation}
  \begin{equation}\nonumber
    G_{UU}(x) = G_{DD}(x) = \frac{ \sqrt{- 3 x^{2} - 2 x + 1} \left(x^{2} + 2 x - 2\right) + x^{3} - 3 x^{2} - 4 x +2}{2 x^{2} \sqrt{- 3 x^{2} - 2 x + 1}},
    \label{p.UU}
  \end{equation}
  \begin{equation}\nonumber
    G_{DU}(x) = \frac{ \left(x^{2} - 1\right) \sqrt{- 3 x^{2} - 2 x + 1} - x^{3} - 3 x^{2} - x + 1}{2 x^{2} \sqrt{- 3 x^{2} - 2 x + 1}}.
    \label{p.DU}
  \end{equation}
  \label{cor1}
\end{cor}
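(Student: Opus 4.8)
The plan is to extract each popularity generating function directly from the corresponding bivariate generating function of \cref{thm.gf}. If $F_p(x,y)=\sum_{n,k} a_{n,k}\, x^n y^k$, where $a_{n,k}$ counts the paths of $\mathcal{D}_n^{h,\geq}$ with exactly $k$ occurrences of $p$, then the popularity of $p$ in $\mathcal{D}_n^{h,\geq}$ is $\sum_k k\, a_{n,k}$, so the popularity generating function is obtained by the standard operation
$$G_p(x) = \left.\frac{\partial F_p}{\partial y}(x,y)\right|_{y=1}.$$
Thus the whole proof reduces to differentiating the three explicit expressions of \cref{thm.gf} with respect to $y$ and setting $y=1$.

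The key observation that makes this tractable is that each radicand occurring in \cref{thm.gf} collapses, at $y=1$, to $1-2x-3x^2$: for $F_{UD}$ and $F_{DU}$ the expression $-4x^2+(x^2(y-1)+xy-1)^2$ becomes $-4x^2+(x-1)^2=1-2x-3x^2$, and for $F_{UU}$ the expression $-4x^2y^2+(x^2y(y-1)-x+1)^2$ becomes $-4x^2+(1-x)^2=1-2x-3x^2$. Consequently the single radical $R:=\sqrt{-3x^2-2x+1}$ appearing in the claimed formulas is exactly $\sqrt{B(x,1)}$ in each case, and it is worth recording the two identities $R^2=1-2x-3x^2$ and $1-2x-3x^2=(1+x)(1-3x)$ that will drive the final simplification.

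First I would handle $F_{UD}$, whose $y$-denominator $2x^2$ is independent of $y$, so only the numerator is differentiated; the quotient rule then yields a term in $R$ and a term in $1/R$, and rationalizing over $R$ together with the factorization $-(x+1)(3x-1)=1-2x-3x^2=R^2$ produces the stated denominator $2x(3x-1)$. For $F_{UU}$ and $F_{DU}$ the $y$-denominators $2x^2y^2$ and $2x^2y$ do depend on $y$, so the full quotient rule contributes an extra boundary term (at $y=1$ a multiple of $F_p(x,1)=M(x)$); after evaluating at $y=1$ and clearing the $1/R$ contribution against $R$ one again recovers the advertised closed forms over the denominator $2x^2R$. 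An independent check is available through \cref{thm1}: since $\phi$ is a bijection, the popularity of $p$ in $\mathcal{D}_n^{h,\geq}$ equals the popularity of $\phi(\m p)$ over $\mathcal{M}_n$, so for instance $G_{UD}$ equals the sum of the popularities of $F$ and $UD$ in Motzkin paths, each computable from $M(x)$.

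The differentiation itself is routine; the main obstacle is purely algebraic, namely bringing the differentiated expressions — which naturally appear as a combination of $R$ and $R^{-1}$ over $2x^2$ (or $2x^2$ times a power of $y$) — into the compact shapes stated in the corollary. This hinges on systematically rationalizing the $R^{-1}$ terms and repeatedly invoking $R^2=1-2x-3x^2$ together with its factorization $(1+x)(1-3x)$; it is precisely this manipulation that converts a denominator of $2x^2$ into the $2x(3x-1)$ and $2x^2R$ denominators appearing in the statement.
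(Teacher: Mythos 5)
Your proposal is correct and follows exactly the paper's own (one-line) argument: compute $G_p(x)=\partial_y F_p(x,y)\big|_{y=1}$ from the bivariate generating functions of Theorem~\ref{thm.gf}. The extra details you supply (the radicands collapsing to $1-2x-3x^2=(1+x)(1-3x)$ at $y=1$) are accurate and merely flesh out the routine algebra the paper leaves implicit.
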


\begin{proof}
  The generating function $G_{p}(x)$  of popularity is directly deduced from the
  bivariate generating function of pattern distribution
  $$
  G_{p}(x) = \frac{\partial F_{p}(x,y)}{\partial y} \bigg\vert_{y = 1}.
  $$
\end{proof}

See  Table~\ref{tab.pop2} for the first terms of the  popularity sequences.
The popularity of the pattern $UD$ generates a shift  of the sequence \sloane{A025566}
in~\cite{sloane}.  As suggested in~\cite{sloane}, the same sequence
enumerates the first differences of the directed animals~sequence~\sloane{A005773},
and also Motzkin paths of length $2n$ whose last
weak valley occurs immediately after step $n$.

The popularity sequence for $DU$ is the sequence \sloane{A025567}.  As mentioned
in~\cite{ferrari} by Ferrari and Munarini, this sequence corresponds
to the number of edges in Hasse diagram of Motzkin paths, where the
partial order is defined by the coverings $FF \mapsto UD, \; FU \mapsto UF, \; DF \mapsto FD, \; DU \mapsto
FF.$ This is an immediate consequence of the fact that $\phi$ maps pattern $DU$ from
constrained Dyck paths to the  patterns $FF, FU, DF, DU$ in  Motzkin paths.

The popularity sequence for $UU$ (or $DD$) does not yet appear in~\cite{sloane}.


\begin{table}[H]
  \begin{tabular}{c|c|c}
    Pattern &  Popularity sequence & OEIS \\\hline

    UD & $1,3,8,22,61,171,483,1373,3923, 11257, 32418, 93644 $ &
    \sloane{A025566} \\

    DU & $0,1, 4, 13, 40, 120, 356, 1050, 3088,
    9069, 26620, 78133$ & \sloane{A025567} \\

    UU, DD & $0,1, 4, 14, 44,
    135, 406, 1211, 3592, 10623, 31260, 92488$ &  \\
  \end{tabular}
  \caption{Popularity of length two patterns in $\mathcal{D}_n^{h, \geq}$ for $1\leq n\leq 12$.  }
  \label{tab.pop2}
\end{table}

\section{Patterns of length 3}
\label{3}

In this part we investigate how $\phi$ transports statistics
associated to patterns of length three, and for each of them we provide generating functions
for the distribution and popularity.

We need the following notations. For a step $X\in\{U,D,F\}$, an occurrence of the pattern $X^+$ inside a path $P$ is an occurrence of $X^k$, where  $X^k$ consists of $k$ consecutive repetitions  of $X$, for some $k\geq 1$. The associated statistic of $X^+$ (denoted $\m X^+$) will be equal to $\sum_{k\geq 1} \m X^k$ where $\m X^k$ is the statistic giving the number of occurrences of $X^k$. More generally, for two possibly empty sequences of steps $Y$ and $Z$ we define the pattern $YX^+Z$ as patterns of the form $YX^kZ$ for $k\geq 1$, and its associated statistic as $\m{YX^+Z}=\sum_{k\geq 1} \m{YX^kZ}$. For instance, an occurrence of the pattern $DU^+D$ can be an occurrence of $DUD$, $DUUD$, $DUUUD$, and so on... In the path $U UF\bm{D UUD}F\bm{D UDUUUD}DFFD D$ we count three occurrences of the pattern $DU^+D$. A path contains a dotted pattern $\e Y$
(resp. $Y\e$) if and only if it starts (resp. ends) with $Y$, and we use the notations $\e\m Y$, $\m Y\e$ for the associated statistics.

Any pattern $UU$ in a Dyck path is immediately followed by an up step or a down step, implying the statistic equation $\m{UU}=\m{UUU}+\m{UUD}$. Also, any pattern $UU$ in a Dyck path $P$ is either at the beginning of $P$ or  immediately preceded by an up or a down step, so $\m{UU}  = \e\m{UU} + \m{UUU} + \m{DUU}$. Using similar arguments we obtain the two following systems of statistic equations.

\begin{center}
$
(a)\left\lbrace\begin{aligned}
  \m{UU} & = \m{UUU} + \m{UUD}          \\
  \m{UU} & = \m{UUU} + \m{DUU} + \e\m{UU}  \\
  \m{UD} & = \m{UUD} + \m{DUD} + \e\m{UD}  \\
  \m{DU} & = \m{DUU} + \m{DUD}
\end{aligned}\right.,
$
\hspace*{0.5em}
$(b)\left\lbrace
\begin{aligned}
  \m{DD} & = \m{DDD} + \m{UDD}          \\
  \m{DD} & = \m{DDD} + \m{DDU} + \m{DD} \e  \\
  \m{UD} & = \m{UDD} + \m{UDU} + \m{UD} \e  \\
  \m{DU} & = \m{DDU} + \m{UDU}
\end{aligned}\right..
$
\end{center}

Observe that for any $P\in\mathcal{D}_n^{h,\geq}$, $n\geq 1$, we have $\e\m{UD}(P)=1$  (resp. $\e\m{UU}(P)=0$) when $P= (UD)^n$, and $\e\m{UD}(P)=0$ (resp. $\e\m{UU}(P)=1$) otherwise. So, we have the statistic equations $\phi(\e\m{UD})=\m\delta_{{F}^n}$ and $\phi(\e\m{UU})=1-\phi(\e\m{UD})$, where $\m\delta_{{F}^n}$ is the Dirac statistic defined by $\m\delta_{{F}^n}(P)=1$ whenever $P=F^n$, and $0$ otherwise.

Knowing the image through $\phi$ of only one statistic $\m{X}\in\{ \m{UUU}, \m{UUD}, \m{DUU}, \m{DUD}\}$ and using results from Section 2, we can obtain the expressions of the images of the three other statistics from the system $(a)$. The same reasoning holds for the second  system $(b)$. So, we split Section 3 into two subsections, each dealing with one of the two systems.


\subsection{The patterns $UUU, UUD, DUU, DUD$}

\begin{thm} For $n\geq 0$, the bijection $\phi$ from $\mathcal{D}_n^{h,\geq}$ to $\mathcal{M}_n$ transports the statistic $\m{UUD}$ as follows: $$ \phi (\m{UUD}) = \m{UF^+D}+\m{UD}.$$
  \label{UUD}
\end{thm}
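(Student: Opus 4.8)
The plan is to prove the statistic equation $\phi(\m{UUD}) = \m{UF^+D} + \m{UD}$ by tracing how an occurrence of the pattern $UUD$ in a constrained Dyck path $P \in \mathcal{D}^{h,\geq}$ is mapped by the recursive definition of $\phi$. Since $\phi$ is defined by structural recursion via the first return decomposition, I would argue by induction on the semilength of $P$, handling separately the two nontrivial cases $P = \alpha UD$ and $P = \alpha UU\beta D\gamma D$ of the definition, and carefully accounting for all occurrences of $UUD$—both those lying entirely inside a subpath and those straddling a decomposition boundary.

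Let me think about where $UUD$ occurrences come from. In the case $P = \alpha UU\beta D\gamma D$, the image is $\phi(\alpha)\phi(\gamma)U\phi(\beta)D$. An occurrence of $UUD$ in $P$ is either (i) internal to $\alpha$, $\beta$, or $\gamma$, or (ii) a "new" occurrence created by the $UU$ or $D$ steps introduced by this top-level decomposition. The key observation is that the explicit $U$ in front of $U\beta$ together with the first step of $\beta$ can form part of a $UUD$ pattern precisely when $\beta$ begins with a descent toward the floor. I would tabulate exactly which boundary occurrences of $UUD$ arise and match them against the boundary occurrences of $UF^+D$ and $UD$ created in the Motzkin image $\phi(\alpha)\phi(\gamma)U\phi(\beta)D$. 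The pattern $UF^+D$—an up step, a positive run of flats, then a down step—should be the image of the configuration where a single $U$ in the Dyck path is surrounded in a way that, under repeated application of the $P = \alpha UD$ rule (which appends an $F$), produces a run of flats between an $U$ and a $D$.

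The cleanest route, however, may be to avoid a full structural induction and instead deduce the result from the already-established Section 2 equations together with a direct bijective/combinatorial correspondence. Concretely, I would look for an auxiliary statistic identity on $\mathcal{D}_n^{h,\geq}$ that expresses $\m{UUD}$ in terms of statistics whose images under $\phi$ are already known (for example via the system $(a)$ once one member's image is computed), and check that the claimed right-hand side $\m{UF^+D} + \m{UD}$ on $\mathcal{M}_n$ is consistent with~\cref{UD} and~\cref{UU}. Since the paper signals that knowing the image of a single statistic in $\{\m{UUU}, \m{UUD}, \m{DUU}, \m{DUD}\}$ suffices to derive the rest via system $(a)$, establishing this one image directly is exactly the needed anchor, so I would commit to proving it from scratch rather than bootstrapping.

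\textbf{The main obstacle} I anticipate is the bookkeeping of boundary patterns in the recursive step: correctly identifying when an occurrence of $UUD$ in $P$ is \emph{created} or \emph{destroyed} at the top-level decomposition, and verifying that these exactly correspond to occurrences of $UF^+D$ and $UD$ appearing at the analogous boundary in $\phi(P)$. In particular, the run-length aspect of $UF^+D$ means I must track how consecutive applications of the flat-appending rule $P = \alpha UD \mapsto \phi(\alpha)F$ concatenate to build a maximal block $UF^kD$ out of what was, in the Dyck path, a nested tower of the form $U(UD)\cdots$. Getting the endpoints of these flat runs to line up with the $U$ and $D$ delimiters—rather than over- or under-counting by one—will be the delicate part, and I would verify the final identity against the small example paths listed after the definition of $\phi$ (such as $UUUUDDDDUUUDDUDD \mapsto UUDDFUFD$) as a consistency check.
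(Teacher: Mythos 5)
Your overall strategy --- induction on the semilength through the recursive definition of $\phi$, splitting into the cases $P=\alpha UD$ and $P=\alpha UU\beta D\gamma D$, and matching boundary occurrences --- is exactly the paper's, and your decision to prove the identity directly rather than bootstrap it from system $(a)$ is correct (that route would be circular, since this theorem is the anchor from which the other three images are derived). The first case is indeed routine: a Dyck path $\alpha$ cannot end in $U$, so appending $UD$ creates no new $\m{UUD}$, and appending $F$ to $\phi(\alpha)$ creates no new $\m{UF^+D}$ or $\m{UD}$ since no occurrence of either can end in a flat step.

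The gap is that you stop at naming the obstacle in the second case without supplying the idea that resolves it. The paper's resolution is a trichotomy on the middle factor: either $\beta=\epsilon$, or $\beta=(UD)^k$ with $k\geq 1$, or $\beta$ starts with $UU$. This trichotomy is exhaustive \emph{only because of the height constraint}: if $\beta\in\mathcal{D}^{h,\geq}$ begins with $UD$, its first return decomposition forces the remainder to have height at most $1$, hence $\beta=(UD)^k$; for unrestricted Dyck paths a fourth case ($\beta$ beginning with $UD$ but not equal to $(UD)^k$) would occur and the count would come out differently. In the three cases the block $U\phi(\beta)D$ contributes, respectively, one new $UD$, one new $UF^kD$ (using $\phi((UD)^k)=F^k$, and conversely $\phi(\beta)=F^k$ only for $\beta=(UD)^k$), or nothing beyond the occurrences internal to $\phi(\beta)$ --- matching the single new $\m{UUD}$ created by $UU\beta D$ in the first two cases and the absence of any new one in the third. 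One also needs the small but necessary observation that no spurious occurrence of $UF^+D$ or $UD$ can straddle the junctions $\phi(\alpha)\phi(\gamma)$, $\phi(\gamma)U$, or $\phi(\beta)D$, because a nonempty Motzkin path can end neither with $U$ nor with a suffix $UF^{j}$, $j\geq 1$, and an initial flat run of a Motzkin path must be followed by $U$, never by $D$. Without these specific facts the ``delicate part'' you flag does not close; with them the induction goes through as in the paper.
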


\begin{proof} We proceed by induction on $n$. For $n=1$, we have $\m{UUD}(UD)=0$. Since $\phi(UD)=F$, we obtain $\m{UF^+D}(F)+\m{UD}(F)=0$, and the result holds. We assume the result for $k\leq n$, and we will prove it for $n+1$ by distinguishing two main cases.

- Whenever $P=\alpha UD$, we have $\phi(P)=\phi(\alpha)F$, and $(\m{UF^+D}+\m{UD})(\phi(P))=(\m{UF^+D}+\m{UD})(\phi(\alpha))$. Using induction hypothesis, $(\m{UF^+D}+\m{UD})(\phi(\alpha))=\m{UUD}(\alpha)$ which is also equal to $\m{UUD}(\alpha UD))$, proving the first case.

- Whenever $P=\alpha UU\beta D \gamma D$, we have $\phi(P)=\phi(\alpha)\phi(\gamma)U\phi(\beta)D$, and  $(\m{UF^+D}+\m{UD})(\phi(P))=(\m{UF^+D}+\m{UD})(\phi(\alpha))+(\m{UF^+D}+\m{UD})(\phi(\gamma)U\phi(\beta)D)$. Using induction hypothesis, we obtain
$(\m{UF^+D}+\m{UD})(\phi(P))=\m{UUD}(\alpha)+(\m{UF^+D}+\m{UD})(\phi(\gamma)U\phi(\beta)D)$. Now we distinguish three subcases in order to prove that $(\m{UF^+D}+\m{UD})(\phi(\gamma)U\phi(\beta)D)=\m{UUD}(UU\beta D\gamma D)$.

If $\beta$ is the empty path, then $(\m{UF^+D}+\m{UD})(\phi(\gamma)U\phi(\beta)D)=1+(\m{UF^+D}+\m{UD})(\phi(\gamma))=1+\m{UUD}(\gamma)=\m{UUD}(UUD\gamma D)$.

If $\beta=(UD)^k$ for some $k\geq 1$, then $(\m{UF^+D}+\m{UD})(\phi(\gamma)U\phi((UD)^k)D)=(\m{UF^+D}+\m{UD})(\phi(\gamma)UF^kD)=(\m{UF^+D}+\m{UD})(\phi(\gamma))+1
= \m{UUD}(\gamma)+1=\m{UUD}(UU(UD)^kD\gamma D)=\m{UUD}(UU\beta D\gamma D)$.

If $\beta$ starts with a double rise $UU$, then $\phi(\beta)$ contains at least one up step ($\phi(\beta)\neq F^\ell$ for any $\ell\geq 0$). So, we have
$(\m{UF^+D}+\m{UD})(\phi(\gamma)U\phi(\beta)D) =
(\m{UF^+D}+\m{UD})(\phi(\gamma))+(\m{UF^+D}+\m{UD})(\phi(\beta)) =
\m{UUD}(\gamma)+\m{UUD}(\beta)$, which equals to $\m{UUD}(UU\beta  D \gamma D)$.

Considering these three cases, the second case is proved and the induction is completed.\end{proof}


\begin{thm} For $n\geq 0$, the bijection $\phi$ from $\mathcal{D}_n^{h,\geq}$ to $\mathcal{M}_n$ transports the statistic $\m{UUU}$ as follows:
  $$ \phi (\m{UUU}) = \m{UF^+D} + 2 \big( \m{UF^+U} +\m{UU}\big).$$
  \label{UUU}
\end{thm}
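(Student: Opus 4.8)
The plan is to avoid a direct induction (as was used for the previous theorem) and instead derive the image of $\m{UUU}$ algebraically from the first equation of system~$(a)$ together with the transport results already established. First I would read off $\m{UUU}=\m{UU}-\m{UUD}$ from the top line of $(a)$ and apply $\phi$, which is additive on statistics, to obtain $\phi(\m{UUU})=\phi(\m{UU})-\phi(\m{UUD})$. Substituting $\phi(\m{UU})=\m U+\m{UU}+\m{UF}$ from~\cref{UU} and $\phi(\m{UUD})=\m{UF^+D}+\m{UD}$ from~\cref{UUD} gives
$$\phi(\m{UUU})=\m U+\m{UU}+\m{UF}-\m{UF^+D}-\m{UD},$$
an identity of statistics on $\mathcal{M}_n$. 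The remaining task is then purely to rewrite this right-hand side in the stated closed form.

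For that rewriting I would prove two elementary identities on Motzkin paths. The first is $\m U=\m{UU}+\m{UF}+\m{UD}$, which holds because in any Motzkin path every up step is immediately followed by exactly one of $U$, $F$, or $D$ (an up step can never be the last step). The second identity, which I expect to be the only nontrivial point, is $\m{UF}=\m{UF^+U}+\m{UF^+D}$. To establish it I would argue combinatorially: each occurrence of the factor $UF$ opens a maximal block $UF^{k}$ with $k\geq1$, and since the path stays at positive height right after the initial $U$, this block is necessarily followed by a $U$ or a $D$. Hence every occurrence of $UF$ extends uniquely to an occurrence of $UF^{k}U$ or $UF^{k}D$; conversely each occurrence of $UF^+U$ or $UF^+D$ contains exactly one factor $UF$, namely its leading two steps. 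This bijection yields the identity.

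Combining the pieces then finishes the proof: using $\m U-\m{UD}=\m{UU}+\m{UF}$ turns the displayed expression into $2\m{UU}+2\m{UF}-\m{UF^+D}$, and substituting $\m{UF}=\m{UF^+U}+\m{UF^+D}$ collapses this to $\m{UF^+D}+2\big(\m{UF^+U}+\m{UU}\big)$, as required. The main obstacle is recognizing that the right bookkeeping device is the maximal-$F$-run identity $\m{UF}=\m{UF^+U}+\m{UF^+D}$; once this and the trivial count $\m U=\m{UU}+\m{UF}+\m{UD}$ are in place, the whole argument reduces to a short linear manipulation, with the substance already carried by~\cref{UU} and~\cref{UUD}.
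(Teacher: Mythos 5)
Your proposal is correct and follows essentially the same route as the paper's own proof: subtract $\phi(\m{UUD})$ from $\phi(\m{UU})$ via the first equation of system $(a)$, then simplify with the Motzkin identities $\m U=\m{UU}+\m{UF}+\m{UD}$ and $\m{UF}=\m{UF^+U}+\m{UF^+D}$. The only difference is that you spell out the justification of these two identities, which the paper treats as trivial.
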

\begin{proof} Using Theorem~\ref{UUD} and the equation $\m{UU} = \m{UUD} + \m{UUU}$ of system $(a)$, we have
  $\phi(\m{UUU}) = \phi(\m{UU}) - \phi(\m{UUD})=\phi(\m{UU})- \m{UF^+D}-\m{UD}$,
  and using Theorem~\ref{thm1}, we have
  $\phi(\m{UUU}) = \m{U} + \m{UU} + \m{UF} - \m{UF^+D}-\m{UD}.$
 Since  we have $\m{U}=\m{UD}+\m{UU}+\m{UF}$ on $\mathcal{M}_n$, we obtain
  $\phi(\m{UUU}) = 2 (\m{UU} + \m{UF} )  -  \m{UF^+D}.$
Using the trivial equation $\m{UF}=\m{UF^+D}+\m{UF^+U}$ for Motzkin paths, we complete the proof.
\end{proof}

\begin{thm}
  For $n\geq 1$, the bijection $\phi$ from $\mathcal{D}_n^{h,\geq}$ to $\mathcal{M}_n$ transports the statistic $\m{DUU}$ as follows:
  $$ \phi (\m{DUU}) = \m{UF^+D} +\m{UD} + \m\delta_{F^n}-\m{1}, $$
where $\m\delta_{F^n}$ is the Dirac statistic defined by $\m\delta_{F^n}(P)=1$ whenever $P=F^n$, and $0$ otherwise.  \label{DUU}
\end{thm}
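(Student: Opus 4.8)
The plan is to avoid a fresh induction and instead reduce the statement to the already-established Theorem~\ref{UUD} together with the first two equations of system~$(a)$. Subtracting $\m{UU}=\m{UUU}+\m{DUU}+\e\m{UU}$ from $\m{UU}=\m{UUU}+\m{UUD}$ cancels both $\m{UU}$ and $\m{UUU}$, which yields the clean identity $\m{DUU}=\m{UUD}-\e\m{UU}$ on $\mathcal{D}_n^{h,\geq}$. Applying the bijection $\phi$ and using the $\mathbb{Z}$-module structure of statistics, this becomes $\phi(\m{DUU})=\phi(\m{UUD})-\phi(\e\m{UU})$.

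Next I would substitute the two images that are already at our disposal. Theorem~\ref{UUD} gives $\phi(\m{UUD})=\m{UF^+D}+\m{UD}$, and the observation recorded at the start of this section gives $\phi(\e\m{UU})=\m{1}-\phi(\e\m{UD})=\m{1}-\m\delta_{F^n}$. Plugging these in, one obtains $\phi(\m{DUU})=(\m{UF^+D}+\m{UD})-(\m{1}-\m\delta_{F^n})=\m{UF^+D}+\m{UD}+\m\delta_{F^n}-\m{1}$, which is exactly the claimed formula.

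Since the genuine combinatorial content, namely the inductive transfer of $\m{UUD}$ through $\phi$, was already discharged in Theorem~\ref{UUD}, and the boundary term $\phi(\e\m{UU})$ was pinned down beforehand, there is essentially no remaining obstacle: the argument is a two-line manipulation in the module of statistics. The only point requiring care is the hypothesis $n\geq 1$: for the empty path the dotted statistics and the identity $\phi(\e\m{UU})=\m{1}-\m\delta_{F^n}$ carry no content, which is precisely why the statement is restricted to $n\geq 1$. I would flag this as the domain restriction rather than treat it as a separate case.
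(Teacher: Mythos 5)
Your argument is correct and is essentially identical to the paper's own proof: both take the difference of the first two equations of system $(a)$ to get $\m{DUU}=\m{UUD}-\e\m{UU}$, apply $\phi$, and substitute Theorem~\ref{UUD} together with the previously established identity $\phi(\e\m{UU})=\m{1}-\m\delta_{F^n}$. No gaps.
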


\begin{proof}
  Using the difference of the first two equations of $(a)$, we obtain   $\m{DUU} = \m{UUD} - \e\m{UU}$, and finally,
  $\phi(\m{DUU})=\phi(\m{UUD})-\phi(\e\m{UU})= \m{UF^+D} +\m{UD} -\phi(\e\m{UU})$. As discussed at the beginning of Section 3, $\phi(\e\m{UU})=1-\m\delta_{F^n}$, which completes the proof.
\end{proof}

\begin{thm}
  For $n\geq 1$, the bijection $\phi$ from $\mathcal{D}_n^{h,\geq}$ to $\mathcal{M}_n$ transports the statistic $\m{DUD}$ as follows:
    $$
  \phi (\m{DUD})  = \m{F} - \m{UF^+D}-\m\delta_{F^n}
  $$
  \label{DUD}
\end{thm}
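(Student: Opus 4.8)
The plan is to avoid any fresh induction and instead reduce the claim to statistics whose images under $\phi$ are already known. The key observation is that the third equation of system $(a)$, namely $\m{UD} = \m{UUD} + \m{DUD} + \e\m{UD}$, isolates $\m{DUD}$ directly: it records that every peak $UD$ in a path of $\mathcal{D}_n^{h,\geq}$ is either preceded by an up step, preceded by a down step, or sits at the very beginning of the path. Solving for the valley-containing pattern gives the statistic equation $\m{DUD} = \m{UD} - \m{UUD} - \e\m{UD}$.

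First I would apply $\phi$ to this identity. Since $\phi$ transports statistics additively (the maps $\phi(\cdot)$ are $\mathbb{Z}$-module homomorphisms, as used repeatedly in the preceding theorems), this yields $\phi(\m{DUD}) = \phi(\m{UD}) - \phi(\m{UUD}) - \phi(\e\m{UD})$. Next I would substitute the three images, all of which are already available: \cref{UD} of Theorem~\ref{thm1} gives $\phi(\m{UD}) = \m{F} + \m{UD}$; Theorem~\ref{UUD} gives $\phi(\m{UUD}) = \m{UF^+D} + \m{UD}$; and the discussion at the start of Section~3 gives $\phi(\e\m{UD}) = \m\delta_{F^n}$. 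The two $\m{UD}$ terms cancel, leaving $\phi(\m{DUD}) = \m{F} - \m{UF^+D} - \m\delta_{F^n}$, which is exactly the asserted formula.

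Since every ingredient has already been established, there is no genuine obstacle here; the only subtlety I expect is bookkeeping, namely confirming the sign of each substituted term and noting that $\phi(\e\m{UD}) = \m\delta_{F^n}$ is applied on $\mathcal{M}_n$ for $n \geq 1$, consistent with the hypothesis of the theorem. As an alternative I could instead start from the fourth equation $\m{DUD} = \m{DU} - \m{DUU}$ and invoke Theorem~\ref{thm1} together with Theorem~\ref{DUU}, but this route passes through the heavier image $\phi(\m{DU}) = \m{FF} + \m{FU} + \m{DF} + \m{DU}$ and forces more cancellation, so I would not pursue it.
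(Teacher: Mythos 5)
Your proposal is correct and follows exactly the paper's own argument: it derives $\m{DUD}= \m{UD} - \m{UUD} - \e\m{UD}$ from the third equation of system $(a)$, applies $\phi$ linearly, and substitutes the known images from Theorem~\ref{thm1}, Theorem~\ref{UUD}, and the identity $\phi(\e\m{UD})=\m\delta_{F^n}$. No further comment is needed.
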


\begin{proof}
  We have $\m{DUD}= \m{UD} - \m{UUD} - \bullet\m{UD}$, which implies that $\phi(\m{DUD})=\phi(\m{UD})-\phi(\m{UUD})-\phi(\bullet\m{UD})$.
  From Theorem \ref{thm1}, Theorem \ref{UUD} and the observations after the systems ($a$) and ($b$) we obtain $\phi(\m{DUD})=\m{UD}+\m{F}-\m{UD}-\m{UF^+D}-\m\delta_{F^n}=\m{F}-\m{UF^+D}-\m\delta_{F^n}$.
\end{proof}

\begin{thm}
  The bivariate generating functions $F_p(x,y)$ where the coefficient of $x^ny^k$ is the number of Dyck paths in
   $\mathcal{D}_n^{h,\geq}$ containing exactly $k$ occurrences of the pattern $p\in\{UUU,UUD,DUU,DUD\}$ are given by the following expressions listed in the same order as written above.
  \scriptsize

  \begin{equation}
    \nonumber\frac{x^3 y - x^3 - x^2 y^2 + 2 x - 1 + \sqrt{(x - x y - 1) (x^2 - x y + x - 1)
        ( x^3 - x^3 y + x^2 y^2 + 2 x^2 y - 2 x y - 2 x + 1)} } {2 x^2 y^2 (x - 1)},
    \label{g.f.UUU}
  \end{equation}
  \begin{equation}
    \nonumber
     \frac{x^{2} y - 2 x^{2} + 2 x - 1 + \sqrt{\left(x^{2} y - 1\right) \left(x^{2} y - 4 x^{2} + 4 x - 1\right)} }{2 x^{2} \left(x - 1\right)},
    \label{g.f.UUD}
  \end{equation}
  \begin{equation}
    \nonumber
     \frac{2 x - x^{2} y -1 + \sqrt{\left(x^{2} y - 1\right) \left(x^{2} y - 4 x^{2} + 4 x - 1\right)}}{2 x^{2} y \left(x - 1\right)},
    \label{g.f.DUU}
  \end{equation}
  \begin{equation}
    \nonumber
    \frac{
      x^3 y - x^3  - x^2 y^2 + 2 x y
      - 1 +
      \sqrt{
        (xy - x - 1)
        (x^2 + yx - x - 1)
        (x^3 y - x^3 + x^2 y^2 + 2 x^2 y - 2 x y - 2 x + 1)
      }
    }{2 x^2 ( x y - 1)}.
    \label{g.f.DUD}
  \end{equation}
  \label{UUU.UUD.DUU.DUD.g.f}
\end{thm}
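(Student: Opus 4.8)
The plan is to carry each pattern over to the Motzkin side using Theorems~\ref{UUD}, \ref{UUU}, \ref{DUU} and~\ref{DUD}, and then to compute the resulting distributions on $\mathcal{M}$ by deriving and solving a quadratic functional equation from the first return decomposition
\[
\mathcal{M} = \epsilon + F\mathcal{M} + U\mathcal{M}D\mathcal{M}.
\]
Writing $M(x,y)=\sum_{P\in\mathcal{M}} x^{|P|} y^{\sigma(P)}$ for the statistic $\sigma$ attached to a given pattern, the only delicate point in each case is to account for the occurrences that straddle the junctions of this decomposition. Every pattern whose occurrences can cross a junction begins with $U$, so prepending a flat step creates none of them; hence the term $F\mathcal{M}$ contributes simply $xM$ (or $xyM$ when flats are also marked). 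All straddling occurrences live at the outer pair $U\cdots D$ of the factor $U\mathcal{M}D\mathcal{M}$, and such an occurrence is present precisely when the enclosed path $P_1$ is a (possibly empty) run of flat steps. I would therefore split $P_1$ into three shapes, namely the empty path ($1$), a nonempty all-flats path $F^{k}$ ($\tfrac{x}{1-x}$), and a path beginning with an up step ($M-\tfrac{1}{1-x}$), and attach to each the correct boundary weight.

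For $UUD$, Theorem~\ref{UUD} gives $\sigma=\m{UF^+D}+\m{UD}$, which counts the pyramids $UF^kD$ with $k\ge 0$; here every all-flats $P_1$ (including $P_1=\epsilon$, which produces the pyramid $UD$) carries a boundary weight $y$, while a non-all-flats $P_1$ carries weight $1$. This yields the quadratic whose branch regular at $x=0$ is $F_{UUD}$. For $UUU$, Theorem~\ref{UUU} gives $\sigma=\m{UF^+D}+2(\m{UF^+U}+\m{UU})$, and the same decomposition applies, except that the empty and nonempty all-flats cases now split: the outer occurrence carries weight $0$ when $P_1=\epsilon$ (the string $UD$ is not counted), weight $1$ when $P_1=F^{k}$ with $k\ge 1$ (a $UF^kD$), and weight $2$ when $P_1$ begins with $U$ (a $UU$ or a $UF^kU$); solving the resulting quadratic gives $F_{UUU}$.

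For the last two patterns I would reuse system $(a)$ and the $0/1$ observations recorded just before this subsection rather than restart. From $\m{DUU}=\m{UUD}-\e\m{UU}$, together with the fact that $\e\m{UU}$ equals $1$ on every path except $(UD)^n$ (whose image is $F^n$), the transported statistic for $DUU$ equals that of $UUD$ decreased by one off the all-flats path and equal to $0$ on it; this yields $F_{DUU}$ from $F_{UUD}$ by a rescaling in $y$ with an explicit rational correction,
\[
F_{DUU}(x,y)=1+\frac{x}{1-x}+\frac{1}{y}\Big(F_{UUD}(x,y)-1-\frac{x}{1-x}\Big).
\]
For $DUD$, Theorem~\ref{DUD} gives the statistic $\m{F}-\m{UF^+D}-\m\delta_{F^n}$; since $\m{F}-\m{UF^+D}$ is a genuine nonnegative statistic (the number of flats minus the number of $U$-preceded, $D$-followed flat runs), I would either set up a direct functional equation marking flats by $y$ and removing one $y$ per boundary pyramid, or pass through the joint distribution of $(\m{UD},\m{UUD})$ via $\m{DUD}=\m{UD}-\m{UUD}-\e\m{UD}$, finishing with the rational correction $-\tfrac{x(y-1)}{1-xy}$ that repairs the Dirac term on the all-flats paths $F^n$.

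The hard part throughout is the boundary bookkeeping: one must recognise that a straddling occurrence at the outer $U\cdots D$ arises exactly when the enclosed factor is all-flats, and then assign the right weight by separating $\epsilon$, $F^{k\ge 1}$ and non-all-flats, while simultaneously handling the Dirac corrections on $F^n$ and selecting the square-root branch with $M(0,y)=1$. Once the four quadratics are correctly assembled, extracting the closed forms is routine if lengthy algebra; the nontrivially factoring discriminants and the denominators $2x^2(x-1)$, $2x^2y(x-1)$, $2x^2y^2(x-1)$ and $2x^2(xy-1)$ serve as internal consistency checks, as does matching the leading coefficients against the distribution already tabulated in Table~\ref{tab.UD}.
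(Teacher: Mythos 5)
Your proposal is correct and follows essentially the same route as the paper: transfer each statistic to Motzkin paths via Theorems~\ref{UUD}--\ref{DUD}, then refine the first return decomposition $\mathcal{M}=\epsilon+F\mathcal{M}+U\mathcal{M}D\mathcal{M}$ by splitting the enclosed factor into $\epsilon$, $F^{k}$ ($k\geq 1$) and the rest so as to weight the straddling occurrences at the outer $U\cdots D$, exactly as the paper does explicitly for $UUU$. The paper dispatches the other three cases with ``a similar method,'' and your shortcuts there --- deducing $F_{DUU}$ from $F_{UUD}$ by the $y$-rescaling off the all-flats paths, and repairing the Dirac term for $DUD$ by the correction $-x(y-1)/(1-xy)$ --- are consistent with the stated closed forms; only your phrase ``a path beginning with an up step'' for the class with generating function $M-\frac{1}{1-x}$ is loose (it should read ``nonempty and not all-flats''), but the weights you attach are the right ones.
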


\begin{proof} For $p=UUU$ and using Theorem \ref{UUU}, we have $\phi(\m{UUU})=\m{UF^+D}+2(\m{UF^+U}+\m{UU})$. So, we decompose Motzkin paths according to the patterns $UF^+D$, $UF^+U$, and $UU$ in order to exhibit a functional equation having $F_p(x,y)$ as solution:
$$\mathcal{M}=\epsilon + F \mathcal{M} + UD \mathcal{M}+ U\mathcal{M}_0 D\mathcal{M}+U(\mathcal{M}\backslash(\mathcal{M}_0\cup\epsilon))D\mathcal{M},$$ where
$\mathcal{M}_0$ is the subset of $\mathcal{M}$ consisting of paths of the form $F^k$ for $k\geq 1$.
The generating function for $\epsilon+F\mathcal{M}+UD \mathcal{M}$ is $1 + x F_{UUU}(x,y) + x^2 F_{UUU}(x,y)$; the g.f. for $U\mathcal{M}_0 D\mathcal{M}$ is $x^2y \frac{x}{1-x}
  F_{UUU}(x,y)$ since a path in $U\mathcal{M}_0 D$ is an occurrence of $UF^+D$; the g.f. for  $U(\mathcal{M}\backslash(\mathcal{M}_0\cup\epsilon))D\mathcal{M}$ is $x^2 y^2 \left( F_{UUU}(x,y) - \frac{1}{1-x} \right)
  F_{UUU}(x,y)$ since a path in $U(\mathcal{M}\backslash(\mathcal{M}_0\cup\epsilon))D$ starts with either an occurrence of $UU$ or an occurrence of $UF^+U$.
	
So the functional equation is:
$$\begin{aligned}
  F_{UUU}(x,y) = 1 + x F_{UUU}(x,y) +& x^2 F_{UUU}(x,y) + x^2y \frac{x}{1-x}
  F_{UUU}(x,y) +\\
  +& x^2 y^2 \left( F_{UUU}(x,y) - \frac{1}{1-x} \right)
  F_{UUU}(x,y) .
\end{aligned}$$
  A simple calculation (with Maple for instance) provides the result.

All other generating functions are obtained using a  similar method, so we do not give the proofs here.
\end{proof}

The generating function $G_p(x)$ for the popularity of  the pattern $p\in\{UUU,UUD,$ $DUU, DUD\}$ is obtained directly by evaluating $\frac{\partial F_{p}(x,y)}{\partial y} \big\vert_{y = 1}.$ Table~\ref{tab.pop3a} provides the first terms of the associated sequences. See also Table~\ref{UUU.DUD.UUD.DUU.tab} for an illustration of the distribution of $p\in\{UUU,UUD,DUU, DUD\}$.

\begin{table}[H]
  \begin{tabular}{c|c|c}
    Pattern &  Popularity sequence & OEIS \\\hline

    UUU &  $0,0,1, 5, 19, 65, 210, 658, 2023, 6147, 18534, 55594 $ &  \\
    UUD &  $0,1, 3, 9, 25, 70, 196, 553, 1569, 4476, 12826, 36894$ & \sloane{A097861}  \\
    DUU & $0,0,0,1, 5, 20, 70, 231, 735, 2289, 7029, 21384$  & \sloane{A304011} ? \\
    DUD & $0,1, 4, 12, 35, 100, 286, 819, 2353, 6780, 19591, 56749$ &    \\
  \end{tabular}
  \caption{Popularity of $p\in\{UUU,UUD,DUU,DUD\}$ in $\mathcal{D}_n^{h, \geq}$ for $1\leq n\leq 12$.}
  \label{tab.pop3a}
\end{table}

The popularity sequence for $UUD$ in $\mathcal{D}_n^{h,\geq}$ is
the sequence \sloane{A097861}, corresponding to the number of humps in
all Motzkin paths of length $n$ (a hump equals $UF^+D$ or $UD$ in our
notation).  The popularity for $DUU$ seems to generate the sequence \sloane{A304011}, but we did not succeed
 in proving this fact, so we leave it as a conjecture.

\setlength{\tabcolsep}{0.35em}
\begin{table}[H]
  \centering
  \small
  \begin{subfigure}{0.48\textwidth}
    \begin{tabular}{c|ccccccccc}
    $ k \backslash n $ &   1 &  2 &  3 &  4 &  5 &  6 &  7 &  8 &  9   \\\hline
            0	& 1	& 2	& 3	& 5	& 8	& 13	& 21	& 34	& 55		 \\
            1	& 	& 	& 1	& 3	& 8	& 18	& 38	& 76	& 147		 \\
            2	& 	& 	& 	& 1	& 4	& 14	& 40	& 104	& 250		 \\
            3	& 	& 	& 	& 	& 1	& 5	& 21	& 71	& 215		 \\
            4	& 	& 	& 	& 	& 	& 1	& 6	& 30	& 119		 \\
            5	& 	& 	& 	& 	& 	& 	& 1	& 7	& 40		 \\
  \end{tabular}
  \caption{$UUU$}
  \label{UUU.tab}
  \end{subfigure}
  \quad
  \begin{subfigure}{0.48\textwidth}
    \begin{tabular}{c|ccccccccc}
      $ k \backslash n $ &   1 &  2 &  3 &  4 &  5 &  6 &  7 &  8 &  9  \\\hline
       0 & 1	& 1	& 1	& 2	& 3	& 6	& 10	& 20	& 36	  \\
       1 & 	& 1	& 2	& 3	& 7	& 13	& 30	& 58	& 130	   \\
       2 & 	& 	& 1	& 3	& 6	& 16	& 35	& 91	& 199	   \\
       3 & 	& 	& 	& 1	& 4	& 10	& 30	& 75	& 216	  \\
       4 & 	& 	& 	& 	& 1	& 5	& 15	& 50	& 140	   \\
       5 & 	& 	& 	& 	& 	& 1	& 6	& 21	& 77	   \\
    \end{tabular}
    \caption{$DUD$}
    \label{DUD.tab}
  \end{subfigure}

  \vspace*{1em}

  \begin{subfigure}{0.48\textwidth}
   \begin{tabular}{c|ccccccccc}
      $ k \backslash n $ &  1 &  2 &  3 &  4 &  5 &  6 &  7 &  8 &  9 \\\hline
       0 & 1 & 1 & 1 & 1 & 1 & 1 & 1 & 1 & 1 \\
       1 &  & 1 & 3 & 7 & 15 & 31 & 63 & 127 & 255 \\
       2 &  &  &  & 1 & 5 & 18 & 56 & 160 & 432 \\
       3 &  &  &  &  &  & 1 & 7 & 34 & 138 \\
       4 &  &  &  &  &  &  &  & 1 & 9 \\
    \end{tabular}
    \caption{$UUD$}
    \label{UUD.tab}
  \end{subfigure}
  \quad
  \begin{subfigure}{0.48\textwidth}
    \begin{tabular}{c|ccccccccc}
      $ k \backslash n $ &  1 &  2 &  3 &  4 &  5 &  6 &  7 &  8 &  9\\\hline
       0 & 1 & 2 & 4 & 8 & 16 & 32 & 64 & 128 & 256 \\
       1 &  &  &  & 1 & 5 & 18 & 56 & 160 & 432 \\
       2 &  &  &  &  &  & 1 & 7 & 34 & 138 \\
       3 &  &  &  &  &  &  &  & 1 & 9 \\
        4 &  &  &  &  &  &  &  &  &  \\
   \end{tabular}
    \caption{$DUU$}
    \label{DUU.tab}
  \end{subfigure}

  \caption{Number of paths from $\mathcal{D}_n^{h,\geq}$ having $k$
    occurrences of the considered  pattern. }
\label{UUU.DUD.UUD.DUU.tab}
\end{table}
Dyck paths from $\mathcal{D}_n^{h,\geq}$ avoiding $UUU$, $DUU$ respectively generate
 Fibonacci numbers and integer squares. Those avoiding $DUD$ seem to correspond to
\sloane{A007562} (number of planted trees where non-root, non-leaf
nodes at even distance from root are of degree 2). At the present
time, there is no closed form for the generating function of sequence
\sloane{A007562}. Note that any path avoiding $DUU$ has at
    most one occurrence of $UUD$. Also, Dyck paths containing two occurrences of $UUD$ in $\mathcal{D}_n^{h,\geq}$
    generate a shift of sequence \sloane{A001793}, which corresponds to a subsequence in the triangle of coefficients of Chebyshev's polynomials which is sequence \sloane{A053120}.

\subsection{The patterns $DDD, DDU, UDD, UDU$}

\begin{thm} For $n\geq 0$, the bijection $\phi$ from $\mathcal{D}_n^{h,\geq}$ to $\mathcal{M}_n$ transports the statistic  $\m{UDU}$ as follows:
   $$\phi (\m{UDU})  = \m{FF} + \m{FUD} $$
 \label{UDU}\end{thm}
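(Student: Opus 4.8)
The plan is to argue by induction on the semilength $n$, in the same spirit as the proof of Theorem~\ref{UUD}, by expanding $P\in\mathcal{D}_n^{h,\geq}$ through the two recursive rules defining $\phi$. It helps to read both statistics locally: an occurrence of $UDU$ is a peak $UD$ immediately followed by an up step, whereas $(\m{FF}+\m{FUD})(M)$ counts the flat steps of a Motzkin path $M$ whose immediate successor is another flat step or the first step of a peak $UD$. The base cases $P=\epsilon$ and $P=UD$ (with $\phi(UD)=F$) give $0$ on both sides.

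In the case $P=\alpha UD$ we have $\phi(P)=\phi(\alpha)F$. On the source, the trailing peak creates a new $UDU$ exactly when $\alpha$ ends with a peak, so $\m{UDU}(P)=\m{UDU}(\alpha)+\m{UD}\e(\alpha)$; on the image, the trailing $F$ creates a new $FF$ exactly when $\phi(\alpha)$ ends with $F$ and never a new $FUD$. After applying the induction hypothesis to the interiors, the case reduces to the identity $\m{UD}\e(\alpha)=[\phi(\alpha)\text{ ends with }F]$, which holds because $\phi$ outputs a trailing $F$ only via the rule $\phi(\alpha UD)=\phi(\alpha)F$, that is, exactly when the last arch of $\alpha$ is $UD$ --- which is also the only way a Dyck path can end with the pattern $UD$, since a taller last arch ends with $DD$.

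In the case $P=\alpha UU\beta D\gamma D$ we have $\phi(P)=\phi(\alpha)\,\phi(\gamma)\,U\,\phi(\beta)\,D$. I would count occurrences factor by factor, turning the interior contributions $\m{UDU}(\alpha)$, $\m{UDU}(\beta)$, $\m{UDU}(\gamma)$ into the matching $(\m{FF}+\m{FUD})$ totals by induction, and then account for occurrences straddling the four junctions on the image and the corresponding junctions inside $\alpha UU\beta D\gamma D$ on the source. The main obstacle is precisely this junction bookkeeping: whether a straddling $FF$, $FUD$ or $UDU$ appears depends on the first and last steps of $\phi(\alpha)$, $\phi(\beta)$, $\phi(\gamma)$, and the first steps are themselves only determined recursively, so a direct case analysis does not close on its own.

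What rescues the count is the defining height constraint of $\mathcal{D}^{h,\geq}$, which forces the successive arch heights of any path to be non-increasing. Two consequences align all the junction terms. First, the last arch $UU\beta D\gamma D$ has height at least $2$, hence every arch of $\alpha$ has height at least $2$; thus $\alpha$ cannot end with a peak and $\m{UD}\e(\alpha)=0$, which annihilates every term anchored at the $\alpha$-junction on both sides. Second, when $\beta=\epsilon$ the constraint $h(U\beta D)\ge h(\gamma)$ forces $\gamma=(UD)^k$, so $\m{UD}\e(\gamma)=[\gamma\neq\epsilon]$; this matches exactly the single extra $UDU$ created at the peak of the block $UUD\gamma D$, whose following step is the first (up) step of $\gamma$ precisely when $\gamma\neq\epsilon$. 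Splitting on whether $\beta$ is empty and feeding in these two facts, every junction term cancels and both sides collapse to $\m{UDU}(\alpha)+\m{UDU}(\beta)+\m{UDU}(\gamma)$ (augmented by $[\gamma\neq\epsilon]$ when $\beta=\epsilon$), which closes the induction.
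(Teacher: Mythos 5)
Your proof is correct and takes essentially the same route as the paper's: an induction on the recursive definition of $\phi$ with the case split $P=\alpha UD$ versus $P=\alpha UU\beta D\gamma D$, resting on the same two key facts, namely that $\alpha$ cannot end with a peak in the second case (which you justify via the non-increasing arch heights forced by the constraint) and that the single extra occurrence on each side arises exactly when $\beta=\epsilon$ and $\gamma\neq\epsilon$ (equivalently, $\phi(\gamma)$ ends with $F$, since then $\gamma=(UD)^k$). The junction checks you leave implicit are the same routine ones the paper also dispatches briefly.
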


\begin{proof} We proceed by induction on $n$. For $n=1$, we have $\m{UDU}(UD)=0$. With $\phi(UD)=F$, we obtain $\m{FF}(F)+\m{FUD}(F)=0$, and the result holds. We assume the result for $k\leq n$,
and we will prove it for $n+1$.

 - Whenever $P = \alpha UD$, we have
$\phi(P)=\phi(\alpha)F$, and $(\m{FF}+\m{FUD})(\phi(P))=\m{FF}(\phi(\alpha)F)+\m{FUD}(\phi(\alpha)F)=\m{FF}(\phi(\alpha)F)+\m{FUD}(\phi(\alpha))$.
We distinguish two cases: ($i$) $\phi(\alpha)$ ends with $F$, and ($ii$) otherwise.
In the case ($i$), $\alpha$ ends with $UD$, and thus $(\m{FF}+\m{FUD})(\phi(P))=1+\m{FF}(\phi(\alpha))+\m{FUD}(\phi(\alpha))$.
Using the induction hypothesis we have
  $(\m{FF}+\m{FUD})(\phi(P))=1+\m{UDU}(\alpha)=\m{UDU}(\alpha UD)=\m{UDU}(P)$.
In the case ($ii$), $\alpha$ does not end with $UD$, and thus $\phi(\alpha)$ does not end with $F$. So, we have $(\m{FF}+\m{FUD})(\phi(P))=\m{FF}(\phi(\alpha))+\m{FUD}(\phi(\alpha))$, and using the induction  hypothesis
$(\m{FF}+\m{FUD})(\phi(P))=\m{UDU}(\alpha)=\m{UDU}(\alpha UD)=\m{UDU}(P)$.

- Whenever $P$ does not end with $UD$, we have $P=\alpha UU \beta D \gamma D$,  and $(\m{FF}+\m{FUD})(\phi(P))=(\m{FF}+\m{FUD})(\phi(\alpha)\phi(\gamma)U\phi(\beta)D)$.
Note that $\alpha$ cannot end with $UD$, otherwise it would contradict $P\in\mathcal{D}_n^{h,\geq}$. This means that $\phi(\alpha)$ cannot end with $F$.
So, all the possible occurrences of $FF$ in $\phi(P)$ belong necessarily to $\phi(\alpha)$, $\phi(\beta)$ and $\phi(\gamma)$, which implies that $\m{FF}(\phi(P))=\m{FF}(\phi(\alpha))+\m{FF}(\phi(\beta))+\m{FF}(\phi(\gamma))$.
On the other hand, the possible occurrences of $FUD$ in $\phi(P)$ belong necessarily to $\phi(\alpha)$, $\phi(\beta)$, $\phi(\gamma)$, and eventually at the junction of $\phi(\gamma)$ and $U\phi(\beta)D$  whenever $\phi(\gamma)$ ends with $F$ and $\phi(\beta)=\epsilon$. So, we distinguish two cases: ($a$) $\phi(\gamma)$ ends with $F$ and $\beta=\epsilon$, and $(b)$ otherwise.
In the case ($a$), we have $\m{FUD}(\phi(P))=1+\m{FF}(\phi(\alpha))+\m{FF}(\phi(\beta))+\m{FF}(\phi(\gamma))+\m{FUD}(\phi(\alpha))+\m{FUD}(\phi(\beta))+\m{FUD}(\phi(\gamma))$, and using the induction  hypothesis, we obtain $\m{FUD}(\phi(P))=1+\m{UDU}(\alpha)+\m{UDU}(\beta)+\m{UDU}(\gamma)=\m{UDU}(\alpha UU \beta D \gamma D)=\m{UDU}(P)$.
In the case ($b$), we have $\m{FUD}(\phi(P))=\m{FF}(\phi(\alpha))+\m{FF}(\phi(\beta))+\m{FF}(\phi(\gamma))+\m{FUD}(\phi(\alpha))+\m{FUD}(\phi(\beta))+\m{FUD}(\phi(\gamma))$,
and using the induction hypothesis, we obtain $\m{FUD}(\phi(P))=\m{UDU}(\alpha)+\m{UDU}(\beta)+\m{UDU}(\gamma)=\m{UDU}(\alpha UU \beta D \gamma D)=\m{UDU}(P)$.
So, the proof is complete.
\end{proof}

\begin{thm} For $n\geq 0$, the bijection $\phi$ from $\mathcal{D}_n^{h,\geq}$ to $\mathcal{M}_n$ transports the statistic  $\m{UDD}$ as follows:
$$ \phi (\m{UDD})  = \m{FD} + \m{UD} + \m{FUU} + \m{FUF}.$$
\label{UDD}
\end{thm}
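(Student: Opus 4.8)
The plan is to avoid a fresh induction (as used for \cref{UDU}) and instead read off the image of $\m{UDD}$ algebraically from system $(b)$. Its third equation, $\m{UD} = \m{UDD} + \m{UDU} + \m{UD}\e$, rearranges to $\m{UDD} = \m{UD} - \m{UDU} - \m{UD}\e$. Applying $\phi$ and substituting the images already available, namely $\phi(\m{UD}) = \m F + \m{UD}$ from Theorem~\ref{thm1} and $\phi(\m{UDU}) = \m{FF} + \m{FUD}$ from Theorem~\ref{UDU}, reduces the whole statement to identifying the single remaining term $\phi(\m{UD}\e)$.

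The key step, and the one not covered by the observations following the two systems (those treat only the \emph{starts-with} statistics $\e\m{UD}, \e\m{UU}$), is to transfer the \emph{ends-with} statistic $\m{UD}\e$. I would establish that a nonempty $P \in \mathcal{D}^{h,\geq}$ ends with the steps $UD$ if and only if $\phi(P)$ ends with a flat step, and hence $\phi(\m{UD}\e) = \m{F}\e$, where $\m{F}\e$ denotes the indicator that a Motzkin path ends with $F$. This follows from the recursive definition of $\phi$ read as the decomposition of $P$ by its last arch: if $P = \alpha UD$ then $P$ ends with $UD$ and $\phi(P) = \phi(\alpha)F$ ends with $F$; if instead $P = \alpha UU\beta D\gamma D$, then the last arch $UU\beta D\gamma D$ ends with $DD$ (since the nonempty Dyck factor $U\beta D\gamma$ ends with $D$), so $P$ does not end with $UD$, while $\phi(P) = \phi(\alpha)\phi(\gamma)U\phi(\beta)D$ ends with $D$.

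It then remains to eliminate $\m{F}\e$. On Motzkin paths every flat step is classified uniquely by what follows it: it is the last step, or it is followed by $F$, by $D$, or by a $U$ which, never being final, is itself followed by $U$, $F$ or $D$. This gives the identity $\m F = \m{F}\e + \m{FF} + \m{FD} + \m{FUU} + \m{FUF} + \m{FUD}$. Substituting $\phi(\m{UD}\e) = \m{F}\e$ and collecting terms,
$$\phi(\m{UDD}) = (\m F + \m{UD}) - (\m{FF} + \m{FUD}) - \m{F}\e = \m{FD} + \m{UD} + \m{FUU} + \m{FUF},$$
which is the asserted equality (the edge case $n=0$ being trivial, as both sides vanish).

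I expect the middle step to be the main obstacle: unlike the starts-with statistics handled after the systems, $\m{UD}\e$ is governed by the \emph{last} arch rather than the first return, so one must read the recursion for $\phi$ from its output end to see that ending in $UD$ corresponds exactly to ending in a flat step. Everything else is routine bookkeeping with the linear statistic identities already at hand.
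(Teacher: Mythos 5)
Your proposal is correct and follows essentially the same route as the paper: isolate $\m{UDD}$ from the third equation of system $(b)$, substitute $\phi(\m{UD})$ and $\phi(\m{UDU})$ from Theorems~\ref{thm1} and~\ref{UDU}, use the classification $\m F = \m{F}\e + \m{FF} + \m{FD} + \m{FUU} + \m{FUD} + \m{FUF}$, and observe that $\phi$ sends $\m{UD}\e$ to $\m{F}\e$. The only difference is that you spell out the last step (which the paper dismisses as clear from the definition of $\phi$), and your case analysis there is sound.
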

\begin{proof} Considering the third equation of system ($b$), we have $\phi(\m{UDD}) = \phi(\m{UD}) - \phi(\m{UDU}) - \phi(\m{UD}\bullet)$. Using Theorems \ref{thm1} and \ref{UDU}, we obtain $\phi(\m{UDD}) = \m{F}+\m{UD} -\m{FF}-\m{FUD} - \phi(\m{UD}\bullet)$. In any  Motzkin path $P$, a flat step is either at the end of $P$, or followed by $F$, or $D$, or $UU$, or $UD$, or $UF$, that is $\m{F} = \m{F}\bullet + \m{FF} + \m{FD}  + \m{FUU} + \m{FUD} + \m{FUF}$. Then, we obtain $\phi(\m{UDD})=\m{F}\bullet +\m{UD} + \m{FD} + \m{FUU} + \m{FUF} - \phi(\m{UD}\bullet)$. Using the definition of $\phi$  in Introduction, it is clear that  $\phi$ transports $\m{UD}\bullet$ into $\m{F}\bullet$, and the result holds.
\end{proof}

\begin{thm} For $n\geq 0$, the bijection $\phi$ from $\mathcal{D}_n^{h,\geq}$ to $\mathcal{M}_n$ transports the statistic  $\m{DDU}$ as follows:
$$ \phi(\m{DDU})  = \m{DF} + \m{DU} + \m{FUU} + \m{FUF}.$$
\label{DDU}\end{thm}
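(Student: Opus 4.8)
The plan is to avoid a fresh induction and instead derive the image of $\m{DDU}$ purely algebraically from results already in hand, exactly in the spirit of the proof of Theorem~\ref{UDD}. The fourth equation of system $(b)$ reads $\m{DU} = \m{DDU} + \m{UDU}$ (every valley $DU$ is preceded by either $D$ or $U$, and $DU$ can never begin a Dyck path), which gives the statistic identity $\m{DDU} = \m{DU} - \m{UDU}$ on $\mathcal{D}_n^{h,\geq}$. Applying $\phi$ and inserting the image of $\m{DU}$ from Theorem~\ref{thm1} together with the image of $\m{UDU}$ from Theorem~\ref{UDU}, I would obtain
\[
\phi(\m{DDU}) = \phi(\m{DU}) - \phi(\m{UDU}) = \big(\m{FF} + \m{FU} + \m{DF} + \m{DU}\big) - \big(\m{FF} + \m{FUD}\big) = \m{FU} + \m{DF} + \m{DU} - \m{FUD}.
\]

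To finish, I would rewrite $\m{FU} - \m{FUD}$ on the set of Motzkin paths of length $n$. The key observation is that a nonempty Motzkin path can never end with an up step, since a final $U$ would force the path below the $x$-axis; hence every occurrence of the pattern $FU$, which terminates in $U$, is necessarily followed by a further step, and that step must be one of $U$, $D$, or $F$. This yields the elementary splitting $\m{FU} = \m{FUU} + \m{FUD} + \m{FUF}$, so that $\m{FU} - \m{FUD} = \m{FUU} + \m{FUF}$. Substituting back gives $\phi(\m{DDU}) = \m{DF} + \m{DU} + \m{FUU} + \m{FUF}$, as claimed.

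I do not expect any serious obstacle here: the argument is a short algebraic combination of Theorems~\ref{thm1} and~\ref{UDU} through system~$(b)$, with no induction required. The only point demanding a word of care is the auxiliary identity $\m{FU} = \m{FUU} + \m{FUD} + \m{FUF}$, whose justification rests entirely on the remark that the trailing $U$ of an $FU$ pattern is never the last step of a Motzkin path. This parallels the flat-step splitting $\m{F} = \m{F}\e + \m{FF} + \m{FD} + \m{FUU} + \m{FUD} + \m{FUF}$ used to establish Theorem~\ref{UDD}, so the same bookkeeping style carries over directly.
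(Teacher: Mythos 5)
Your proposal is correct and follows exactly the paper's own argument: the paper likewise combines the fourth equation of system $(b)$ with Theorems~\ref{thm1} and~\ref{UDU} to get $\m{FU}+\m{DF}+\m{DU}-\m{FUD}$, and then applies the same splitting $\m{FU}=\m{FUU}+\m{FUF}+\m{FUD}$. Your extra justification of that splitting (a Motzkin path cannot end with an up step) is a valid filling-in of what the paper calls ``straightforward.''
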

\begin{proof} Combining the fourth equation $\m{DU} = \m{DDU}  + \m{UDU}$ of system ($b$) with  Theorems \ref{thm1} and \ref{UDU}, we obtain $\phi(\m{DDU})=\m{FF}+\m{FU}+\m{DF}+\m{DU}-\m{FF}-\m{FUD}=\m{FU}+\m{DF}+\m{DU}-\m{FUD}$. The proof is completed using the straightforward equation  $\m{FU}=\m{FUU}+\m{FUF}+\m{FUD}$ on Motzkin paths.
\end{proof}

\begin{thm} For $n\geq 0$, the bijection $\phi$ from $\mathcal{D}_n^{h,\geq}$ to $\mathcal{M}_n$ transports the statistic  $\m{DDD}$ as follows:
$$\phi (\m{DDD})  = 2 (\m{UU} + \m{UF}) - \m{FD} - \m{FUU} - \m{FUF}.$$
\label{DDD}\end{thm}
\begin{proof} Combining the first equation $\m{DD} = \m{DDD}  + \m{UDD}$ of system ($b$) with  Theorems \ref{thm1} and \ref{UDD}, we obtain
 $\phi(\m{DDD})=\m{U}+\m{UU}+\m{UF}-\m{FD}-\m{UD}-\m{FUU}-\m{FUF}$. Using $\m{U}=\m{UF}+\m{UD}+\m{UU}$ on Motzkin paths, the result holds.
\end{proof}

\begin{thm} The bivariate generating functions $F_p(x,y)$ where the coefficient of $x^ny^k$ is the number of Dyck paths in $\mathcal{D}_n^{h,\geq}$
containing exactly $k$ occurrences of the pattern $p\in \{UDU,UDD,DDU,DDD\}$ are given by the following expressions listed in the same order as written above.

  $$\frac{1 + x (x^2 - x^2 y - y) - \sqrt{
      A
    }
  }{2 x^2 \left(x - x y + 1\right)},
  $$
  $$
  \frac{1 + x (x^2 y - x^2 - x y + x - 1) - \sqrt{B}}
  {2 x^2 \left(x y - x + 1\right)^2},
  $$
  $$
  \frac{1 + x ( 2 x^2 y^2 - 3 x^2 y + x^2 + x y - x - 1) - \sqrt{C
  }}
  {2 x^2 y \left(x y - x + 1\right)},
  $$
  $$
  \frac{1 - x \left(x^2 y^2 - x^2 y - x y^2 + x + 1\right) - \sqrt{D} }{2 x^2 \left(x y - x - y\right)^2}
  $$
  where
  $$A= \left(x + 1\right) \left(x^{2} y - x^{2} + x y - x - 1\right) \left(x^{3} y - x^{3} - 2 x^{2} y + 2 x^{2} + x y + 2 x - 1\right),$$
  $$B= \left(x + 1\right) \left(x^2 y - x^2 + 1\right) \left(x^3 y - x^3 - 3 x^2 y + 3 x^2 - 3 x + 1\right) ,$$
  $$C= \left(x + 1\right) \left(x^2 y - x^2 + 1\right) \left(x^3 y - x^3 - 3 x^2 y + 3 x^2 - 3 x + 1\right) ,$$
  $$D=\left(x y + 1\right) \left(x^2 y - x^2 - x y + x - 1\right) \left(x^3 y^2 - x^3 y - x^2 y^2 - 2 x^2 y + 3 x^2 + 2 x y + x - 1\right).$$
  \label{UDU.UDD.DDU.DDD.g.f}
\end{thm}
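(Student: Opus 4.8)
The plan is to follow the same strategy as in the proof of Theorem~\ref{UUU.UUD.DUU.DUD.g.f}: use the statistic equations of Theorems~\ref{UDU}, \ref{UDD}, \ref{DDU} and \ref{DDD} to replace each pattern statistic on $\mathcal{D}_n^{h,\geq}$ by an explicit linear combination of pattern statistics on Motzkin paths, and then extract $F_p(x,y)$ from a functional equation obtained by a first-return decomposition of $\mathcal{M}$ in which the marking variable $y$ records that combination. In each case the decomposition produces a quadratic equation in $F_p(x,y)$, and we keep the branch that is a power series in $x$ (the one with $F_p(0,y)=1$); a computer algebra system then confirms that the solution simplifies to the announced closed form with radicand $A$, $B$, $C$ or $D$.

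I illustrate the method on $p=UDU$, where Theorem~\ref{UDU} gives $\phi(\m{UDU})=\m{FF}+\m{FUD}$, so $y$ must mark every flat step that is immediately followed by another flat step or by a peak $UD$. Writing $M=F_{UDU}(x,y)$ and letting $M_F$ denote the generating function of the nonempty Motzkin paths beginning with a flat step, the decomposition $\mathcal{M}=\epsilon + F\mathcal{M} + U\mathcal{M}D\mathcal{M}$ gives $M = 1 + M_F + x^2M^2$, since a block $UADQ$ creates no new occurrence ($A$ ends at height $0$, so cannot end with $U$, and the block closes with $D$, which blocks every crossing pattern that must start with $F$). Splitting a path counted by $M_F$ as $F\cdot Q$ according to whether $Q$ is empty, begins with $F$, begins with $UD$, or begins with $UU$ or $UF$ yields $M_F = x\bigl(1 + yM_F + y\,x^2M + x^2M(M-1)\bigr)$, the factor $y$ recording the junction occurrence of $FF$ or $FUD$. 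Eliminating $M_F = M-1-x^2M^2$ produces the quadratic $x^2(x-xy+1)M^2 + (x^3y+xy-x^3-1)M + (x-xy+1)=0$, whose admissible root is exactly the first displayed expression, with discriminant equal to $A$.

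For $p\in\{UDD,DDU\}$ the same scheme applies, but the images $\phi(\m{UDD})=\m{FD}+\m{UD}+\m{FUU}+\m{FUF}$ and $\phi(\m{DDU})=\m{DF}+\m{DU}+\m{FUU}+\m{FUF}$ now involve patterns sensitive to the first two steps of a factor (to separate $FUU$ and $FUF$ from $FUD$) and to whether a step $D$ or $F$ abuts the following factor. One therefore refines the decomposition so that the head of each Motzkin factor (empty, $F$, $UD$, $UF$, or $UU$) and the relevant boundary step are explicit, again obtaining a quadratic for $F_p(x,y)$ whose analytic root matches the claimed formula with radicand $B$ or $C$. The case $p=DDD$ is handled identically once we read $\phi(\m{DDD})=2(\m{UU}+\m{UF})-(\m{FD}+\m{FUU}+\m{FUF})$ as the instruction to weight every occurrence of $UU$ or $UF$ by $y^{2}$ and every occurrence of $FD$, $FUU$ or $FUF$ by $y^{-1}$; since the total exponent equals the genuine nonnegative count $\m{DDD}$ of the preimage, the resulting series is a bona fide power series and its defining quadratic yields the last expression with radicand $D$.

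The only real difficulty is the junction bookkeeping: a length-three pattern may straddle the boundary between consecutive factors of the decomposition, so one must refine $\mathcal{M}$ enough to expose the first one or two steps of each factor, and repeatedly invoke the structural facts that a Motzkin factor never ends with $U$ and that each first-return block closes with $D$. Once the functional equations are set up correctly, solving the quadratics and checking that the radicands coincide with $A,B,C,D$ is a routine computation best delegated to a computer algebra system, exactly as for Theorem~\ref{UUU.UUD.DUU.DUD.g.f}.
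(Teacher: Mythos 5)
Your proposal is correct and follows essentially the same route as the paper: transfer each statistic to Motzkin paths via Theorems~\ref{UDU}--\ref{DDD}, refine the standard Motzkin decomposition just enough to capture the junction occurrences, and solve the resulting quadratic for the power-series branch (your $UDU$ equation $x^2(x-xy+1)M^2+(x^3y+xy-x^3-1)M+(x-xy+1)=0$ does reproduce the stated formula with radicand $A$). The only difference is cosmetic: the paper conditions on the \emph{last} step of the path, introducing auxiliary series $A_p$, $B_p$ for paths ending in $F$ or $D$ and using $\mathcal{M}=\epsilon+\mathcal{M}F+\mathcal{M}U\mathcal{M}D$, whereas you condition on the \emph{first} steps of each factor --- a left--right mirror of the same bookkeeping, including the $y^{-1}$ weighting trick for the signed combination in the $DDD$ case, which the paper also uses.
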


\begin{proof}

  Let $A_p(x,y)$ (resp. $B_p(x,y)$) be the bivariate generating
  function where the coefficient of $x^ny^k$ is the number of
  Motzkin paths of length $n$ ending with a flat step (resp. down step) having exactly $k$
  occurrences of the patterns related to the statistic  $\phi(\m{p})$ obtained in the  r.h.s. of equations
  of Theorems~\ref{UDU},\ref{UDD},\ref{DDU},\ref{DDD}.
  Using a refinement of the classical decompositions
  of  non-empty Motzkin paths by taking into account the occurrences of the considered patterns, we deduce functional equations
  for $A_p(x,y)$ and $B_p(x,y)$. The method being classic, we do not give any more details. The solutions are obtained  by a simple calculation.

  For $p=UDU$:

 \noindent $   \begin{cases}
       F_{p}(x,y) &= 1 + A_{p}(x,y) + B_{p}(x,y) \\
       A_{p}(x,y) &= x + x y A_{p}(x,y) + x B_{p}(x,y) \\
       B_{p}(x,y) &= x^2 + x^2 y A_{p}(x,y) + x^2 B_{p}(x,y) + x^2 F_{p}(x,y) (F_{p}(x,y) - 1);
     \end{cases} $

  For $p=UDD$:

 \noindent $  \begin{cases}
       F_{p}(x,y) = 1 + A_p(x,y) + B_p(x,y)        \\
       A_p(x,y) = xF_{p}(x,y)                 \\
       B_p(x,y) = x^2y F_{p}(x,y) + x^2y A_p(x,y) + x^2 B_p(x,y) +\\
       \quad\quad + x^2 B_p(x,y)^2 + x^2y A_p(x,y) B_p(x,y) + x^2y^2 A_p(x,y)^2 + x^2y A_p(x,y) B_p(x,y);
     \end{cases}
 $

 For $p=DDU$:

\noindent$ \begin{cases}
       F_{p}(x,y) = 1 + A_p(x,y) + B_p(x,y)                      \\
       A_p(x,y) = x  +  x A_p(x,y)  +  x y B_p(x,y)           \\
       B_p(x,y) = x^2 F_p(x,y)   +   x^2 y A_p(x,y) (F_{p}(x,y) - 1)    + x^2 A_p(x,y) +\\
         \qquad + x^2 y B_p(x,y) F_{p}(x,y);
     \end{cases}$

 For $p=DDD$:

\noindent$   \begin{cases}
       F_{p}(x,y) = 1 + A_p(x,y) + B_p(x,y)                       \\
       A_p(x,y) = x F_{p}(x,y)                               \\
       B_p(x,y) = x^2 F_{p}(x,y)  +  x^2 y^2 B_p(x,y)  + x^2 y^2 A_p(x,y) / y  +  x^2 y^2 A_p(x,y)^2 / y^2  + \\ \qquad +x^2 y^2 A_p(x,y) B_p(x,y) / y + x^2 y^2 B_p(x,y)^2 + x^2 y^2 A_p(x,y) B_p(x,y) / y
      .
   \end{cases}
    $

The above equations are intentionally left in non-simplified
  forms, in order to allow the reader to easily retrieve  the refined decompositions from the classical decomposition of Motzkin paths
   $\mathcal{M} = \epsilon + \mathcal{M} F
  + \mathcal{M} U \mathcal{M} D$.
\end{proof}

Generating function $G_p(x)$ for the popularity of  a pattern $p$  is obtained directly by evaluating $\frac{\partial F_{p}(x,y)}{\partial y} \big\vert_{y = 1}.$ Table~\ref{tab.pop3b} provides the first terms of the generated sequences. See also Table~\ref{DDD.UDU.DDU.UDD.tab} for an illustration of the distribution of $p\in\{UDU,UDD,DDU, DDD\}$.

 Unlike what happens for classical Dyck paths, the
popularity of $UDU$ in $\mathcal{D}_{n+1}^{h, \geq}$ is equal to the
popularity of $UD$ in $\mathcal{D}_n^{h,\geq}$, while the corresponding
distributions are different. Dyck paths from $\mathcal{D}_n^{h,\geq}$
avoiding a pattern $UDU$ (resp. $DDD$) are counted by the  Generalized Catalan numbers (resp. by the numbers of
ordered trees with $n$ edges and having no branches of length 1), which corresponds to the sequence \sloane{A004148} (resp. \sloane{A026418}).


\begin{table}[H]
  \centering \small\small
  \begin{subfigure}{0.45\textwidth}
   \begin{tabular}{c|cccccccccc}
      $ k \backslash n $ &  1 &  2 & 3 &  4 &  5 &  6 &  7 &  8 &  9 \\\hline
  0 &	1 &	2 &	3 &	6 &	11 &	22 &	43 &	87 &	176 \\
  1 &	 &	 &	1 &	2 &	7 &	16 &	43 &	102 &	251 \\
  2 &	 &	 &	 &	1 &	2 &	10 &	25 &	80 &	208  \\
  3 &	 &	 &	 &	 &	1 &	2 &	13 &	34 &	130 \\
  4 &	 &	 &	 &	 &	 &	1 &	2 &	17 &	46  \\
  5 &	 &	 &	 &	 &	 &	 &	1 &	2 &	21   \\
    \end{tabular}
    \caption{$DDD$}
    \label{DDD.tab}
  \end{subfigure}
  \quad
  \begin{subfigure}{0.45\textwidth}
    \begin{tabular}{c|cccccccccc}
      $ k \backslash n $ &  1 &  2 &  3 &  4 &  5 &  6 &  7 &  8 &  9 \\\hline
 0  &	1 &	1 &	2 &	4 &	8 &	17 &	37 &	82 &	185 \\
 1  &	 &	1 &	1 &	3 &	7 &	17 &	41 &	102 &	252  \\
 2  &	 &	 &	1 &	1 &	4 &	10 &	28 &	73 &	200 \\
 3  &	 &	 &	 &	1 &	1 &	5 &	13 &	41 &	113  \\
 4  &	 &	 &	 &	 &	1 &	1 &	6 &	16 &	56  \\
 5  &	 &	 &	 &	 &	 &	1 &	1 &	7 &	19  \\
    \end{tabular}
    \caption{$UDU$}
    \label{UDU.tab}
  \end{subfigure}
  \vspace*{1em}
  \begin{subfigure}{0.45\textwidth}
    \begin{tabular}{c|cccccccccc}
      $ k \backslash n $ &  1 &  2 &  3 &  4 &  5 &  6 &  7 &  8 &  9 \\\hline
       0 & 1	& 2	& 3	& 4	& 5	& 6	& 7	& 8	& 9	 \\
       1 & 	& 	& 1	& 5	& 14	& 31	& 59	& 102	& 164	 \\
       2 & 	& 	& 	& 	& 2	& 14	& 57	& 174	& 444	 \\
      3 & 	& 	& 	& 	& 	& 	& 4	& 39	& 209	 \\
       4 & 	& 	& 	& 	& 	& 	& 	& 	& 9	
    \end{tabular}
    \caption{$DDU$}
    \label{DDU.tab}
  \end{subfigure}
  \quad
  \begin{subfigure}{0.45\textwidth}
    \begin{tabular}{c|cccccccccc}
      $ k \backslash n $ & 1 &  2 &  3 &  4 &  5 &  6 &  7 &  8 &  9 \\\hline
          0 & 1	& 1	& 1	& 1	& 1	& 1	& 1	& 1	& 1		\\
         1 & 	& 1	& 3	& 6	& 10	& 15	& 21	& 28	& 36		\\
          2 & 	& 	& 	& 2	& 10	& 31	& 75	& 156	& 292		\\
         3 & 	& 	& 	& 	& 	& 4	& 30	& 129	& 417		\\
          4 & 	& 	& 	& 	& 	& 	& 	& 9	& 89		\\
    \end{tabular}
    \caption{$UDD$}
    \label{UDD.tab}
  \end{subfigure}
  \caption{Number of paths from $\mathcal{D}_n^{h,\geq}$ having $k$
    occurences of the considered pattern.}
  \label{DDD.UDU.DDU.UDD.tab}
\end{table}

\begin{table}[H]
  \begin{tabular}{c|c|c}
     Pattern &  Popularity sequence & OEIS \\\hline
    DDD & $0,0,1, 4, 14, 46, 145, 448, 1365, 4124, 12387, 37060$ &                          \\
    DDU & $0,0,1, 5, 18, 59, 185, 567, 1715, 5146, 15363, 45715$ &                          \\
    UDD & $0,1, 3, 10, 30, 89, 261, 763, 2227, 6499, 18973, 55428$ &                       \\
    UDU & $0,1, 3, 8, 22, 61, 171, 483, 1373, 3923, 11257, 32418$ &   \sloane{A025566}  \\
  \end{tabular}
  \caption{Popularity of 3-length patterns in $\mathcal{D}_n^{h, \geq}$ for $1\leq n \leq 12$.
  }
  \label{tab.pop3b}
\end{table}
\section{Acknowledgements} We would like to thank the anonymous referees for their helpful comments
and suggestions.

\bibliographystyle{plain}

\end{document}